\newtheorem{property}{Property}[section]
\newtheorem{lemma}{Lemma}[section]
\newcommand{\rinner}[1]{\mathfrak{Re}\left(\left<#1\right>\right)}
\newcommand{\inner}[1]{\left<#1\right>}
\theoremstyle{thmstyleone}%
\newtheorem{theorem}{Theorem}
\theoremstyle{thmstyletwo}%
\theoremstyle{thmstylethree}%
\newtheorem{definition}{Definition}%
\begin{document}

\title[An Iterative Methodology for Unitary Quantum Channel Search]{An Iterative Methodology for Unitary Quantum Channel Search}


\author[1]{\fnm{Matthew M.} \sur{Lin}}\email{mhlin@mail.ncku.edu.tw}

\author[2]{\fnm{Hao-Wei} \sur{Huang}}\email{huanghw@math.nthu.edu.tw }

\author*[1]{\fnm{Bing-Ze} \sur{Lu}}\email{l18081028@gs.ncku.edu.tw}

\affil*[1]{\orgdiv{Department of Mathematics}, \orgname{National Cheng Kung University}, \orgaddress{\street{No.1, University Road}, \city{Tainan}, \postcode{70101}, \country{Taiwan}}}

\affil[2]{\orgdiv{Department of Mathematics}, \orgname{National Tsing Hua University}, \orgaddress{\street{101, Section 2, Kuang-Fu Road}, \city{Hsinchu}, \postcode{300}, \country{Taiwan}}}



\abstract{In this paper, we propose an iterative algorithm using polar decomposition to approximate a channel characterized by a single unitary matrix based on input-output quantum state pairs. In limited data, we state and prove that the optimal solution obtained from our method using one pair with a specific structure will generate an equivalent class, significantly reducing the dimension of the searching space. 
Furthermore, we prove that the unitary matrices describing the same channel differ by a complex number with modulus 1.
We rigorously prove our proposed algorithm can ultimately identify a critical point, which is also a local minimum of the established objective function.}

\keywords{Quantum channel,  Stiefel manifold, Polar Decomposition}



\maketitle

\section{Introduction}
A unitary quantum channel describes how a quantum state evolves in a closed quantum system while preserving coherence. Many studies of physical phenomena and circuit designs rely on closed quantum systems because they enable reversible and coherent quantum evolution. 
For instance, Schr\"{o}dinger's equation models the evolution of pure quantum states in such systems, while the Liouville-von Neumann equation extends this framework to mixed states \cite{Rivas12}. Additionally, any quantum circuit can be viewed as implementing a unitary quantum channel, as it operates through a sequence of unitary transformations \cite{Iten17}. Moreover, understanding the dynamics of a quantum mechanism or generating an executable quantum circuit that is sufficiently close to a quantum process is an important task \cite{Kunjummen23}. Beyond these scientific demands, a precise characterization of a quantum system can enhance understanding in many quantum applications, such as communication \cite{Gisin07}, control theory \cite{Dong10}, and information theory \cite{Petz86}.

There are various methods that focus on exploring a quantum channel from given input and output data. Readers can find \cite{Chuang97} for prescribing a quantum channel using quantum measurements. Besides, the authors \cite{Gross10} employ Pauli measurements and state the maximal requirement of the measurements to uniquely determine the desired unknown channel. In \cite{Bisio10}, the authors proposed a fidelity-based loss function to find the best unitary quantum channel that matches the input-output pairs. Later, in \cite{Kunjummen23}, a trace-distance-based shallow tomography method was introduced. In addition to traditional methods, many machine learning-based approaches have been developed to tackle such problems; for more details, readers are encouraged to explore \cite{Alvarez17}, \cite{Cemin24}, \cite{Kardashin22}, \cite{Huang22}, and others. 

Beyond algorithms, a notable analysis brought by \cite{Gutoski14} shows that there require $4n^2-2n-4$ Hermitian matrices to uniquely identify the Choi representation of a channel that maps $\mathbb{C
}^n$ to $\mathbb{C}^n$. Our research aims to propose an effective and efficient method that uses limited data to reach the unitary matrix that describes the unknown quantum channel to reduce the search space. To the best of our knowledge, the use of matrix optimization methods to solve the quantum channel identification problem has not yet been explored.

To begin, we introduce some notations to facilitate our later discussions. Let $\mathcal{S}_n$ denote the set of {$n$-by-$n$} unitary matrices:
\begin{equation*}
\mathcal{S}_n := \{U\in\mathbb{C}^{n\times n}: U^*U = I_n\},
\end{equation*}
where $U^*$ is the conjugate transpose of $ U $, and $I_n$ is the identity matrix of size $ n \times n$. Note that the dimension of $\mathcal{S}_n$ is $n^2$.

Given a sequence of Hermitian positive {\color{black}definite} matrices 
$(\rho, \sigma)$,
this paper focuses on solving the following optimization problem:

\begin{subequations}
\textbf{Problem 1}
\begin{eqnarray} 
\text{Minimize} && g(U) := \dfrac{1}{2}\displaystyle
\|\sigma - U \rho U^* \|_F^2,\\
\text{subject to} && U\in \mathcal{S}_n 
\end{eqnarray}
\end{subequations}\label{eq:prob1}
 where $ \|\cdot\|_F $ denotes the Frobenius norm. {Following the restriction stated in \eqref{eq:prob1}, the search space has dimension $\mathcal{O}(n^2)$. This presents a challenge in finding the optimal solution when $n$ is large enough.}

We aim to propose a convergent, first-order matrix algorithm that does not rely on massive amounts of data, unlike other approaches that require large datasets to achieve the approximation goal. Our method embeds the Stiefel manifold into standard Euclidean space and utilizes the polar decomposition to generate a convergent sequence, ensuring that the objective function decreases and ultimately approaches a local minimum solution.

Our contributions are four folds:
\begin{itemize}
    \item We state and prove in Theorem \ref{thm:global-theorem} that if a quantum channel can be expressed as 
    $\Phi(\rho) = U\rho U^*$
    where $\rho$ has non-degenerated eigenvalues, then all possible $U$ can form $\Phi$ are distinguished by a complex scalar of modulus 1.  
    \item Casting the problem to an optimization problem by defining an objective function constraining on the Stiefel manifold. We then propose an algorithm based on polar decomposition to reach an optimal solution.
    \item Rigorously prove the proposed algorithm monotonely decreases the objective value and converges to the set of critical points. These results build up a cornerstone for further reconstructing the unknown unitary channel using limited data. 
    {
    \item Propose a reconstruction process to recover the unknown $U$ which describes the channel $\Phi(\rho) = U\rho U^*$ for $U\in \mathbb{C}^{n\times n}$ using only $n^2+3n$ quantum observables and operations. A non-degenerate quantum state $\rho$ is the key ingredient in the procedure, which provides a sufficient condition allowing us to adopt the Theorem \ref{thm:one-theorem} to decrease the searching dimension to $n$.
    }
\end{itemize}

The remainder of the paper is organized as follows. In Section 2, we provide related preliminaries, including a precise characterization of the tangent space to $\mathcal{S}_n$, and state the properties of the optimal solutions using one-shot data and extra data. In Section 3, we present the polar decomposition-based algorithm to solve the optimization problem. Later on, in Section 4, we analyze the proposed method to ensure the loss function is decreasing following our method and prove the related convergence issue. In Section 5, we outline the reconstruction steps to recover $U$ from the solution computed by our proposed algorithm. We then assess the number of computational operations required. In Section 6, we verify our theoretical results by numerical examples, including a real-world application in which the objective is to recover an unknown quantum circuit. Finally, we close our paper in the conclusion section.

\section{Preliminaries}
In this section, we revisit essential concepts to support our upcoming discussion. Casting the optimization problem~\ref{eq:prob1} as the following problem.
\begin{subequations}
\begin{eqnarray} 
\text{Minimize} && f(X),\\
\text{subject to} && X\in \mathcal{S}_{n}. 
\end{eqnarray}
\label{eq:fundamental}
\end{subequations}

For any given complex matrix $X$, suppose that $X_{\mathfrak{Re}}$ and $X_{\mathfrak{Im}}$ represent the real and imaginary parts of $X$, respectively.
Let $\gamma:\mathbb{R} \rightarrow \mathcal{S}_n$ be a smooth mapping curve in $\mathcal{S}_n$ such that $\gamma(0) = X\in\mathcal{S}_n$.  Then 
\begin{equation}\label{eq:DFT}
\left.\dfrac{d f(\gamma(t))}{dt}\right\vert_{t=0}  =
 \operatorname{tr}\left(
 \left[
 \frac{\partial f(X)}{\partial X_{\!\mathfrak{Re}}}\right]^\top \dot{\gamma_{\mathfrak{Re}}}(0)
 \right
 )
 +
\operatorname{tr}\left( \left[
  \frac{\partial f(X)}{\partial X_{\!\mathfrak{Im}}}
 \right]^\top \dot{\gamma_{\mathfrak{Im}}}(0)
 \right)
\end{equation}
under the assumption of continuity of the function
\begin{large}$\frac{\partial f}{\partial X_{\!\mathfrak{Re}}}$\end{large} or \begin{large}$\frac{\partial f}{\partial X_{\!\mathfrak{Im}}}$\end{large}.
Here, the superscript "$\top$" denotes the transpose of the matrix. Since $\mathcal{S}_n$ is an embedded submanifold of the Euclidean space $\mathbb{C}^{n\times n}$~\cite{Shu2024}, i.e., $\gamma(t) \in \mathbb{C}^{n\times n}$, $\gamma(0)= X$, and $\gamma^*(t) \gamma(t) = I_n$ for all $t$, it follows by directly differentiating that 
\begin{equation}
\dot{\gamma}^*(t) \gamma(t) +\gamma(t)^* \dot{\gamma}(t) = 0.
\end{equation}
This implies that the tangent space to $\mathcal{S}_{n}$ at $X$, 
 denoted by $T_{X}\mathcal{S}_n$, is given by:
 \begin{eqnarray}\label{eq:tangent}
        T_X \mathcal{S}_n &=& \{Z \in\mathbb{C}^{m\times m}:
    X^* Z+ Z^* X  =0
    \} \nonumber \\
       &=& \{Z \in\mathbb{C}^{m\times m}:
    X^* Z \mbox{ is skew-Hermitian}
    \} \\
     &=&  X \mathcal{H}_n, \nonumber
\end{eqnarray}
 where $\mathcal{H}_n$ denotes the space of all $n \times n$ skew-Hermitian matrices (see also \cite{Edelman1998} for further details). Correspondingly, the orthogonal complement $N_{X} \mathcal{S}_n$ of  $T_{X}\mathcal{S}_n$ is given by: 
 \begin{equation}\label{eq:orthogonal}
        N_{X} \mathcal{S}_n =  X \mathcal{H}_n^\perp.
\end{equation}

To determine the direction of motion from a point $X\in\mathcal{S}_n$ that leads to the most significant decrease in the objective function~\eqref{eq:fundamental}, 
we use a real-valued inner product defined 
and its corresponding norm 
as follows:
\begin{eqnarray}\label{eq:inner}
    \langle A, B \rangle_{R} 
&=& \mbox{Re}(\operatorname{tr}(A^* B)),\\
\end{eqnarray}
where $A$ and $B$ are complex $n\times n$ matrices in $T_{X} \mathcal{S}_n$, 
$\mbox{Re}(\cdot)$ denotes the real part of a complex number, and $\operatorname{tr}(\cdot)$ denotes the trace of a square matrix.
Let $\nabla f(X)$ be the matrix defined by
\begin{equation*}
     \nabla  f (X) =  
\dfrac{\partial f(X)}{\partial X_{\mathfrak{Re}}}+i\dfrac{\partial f(X)}{\partial X_{\mathfrak{Im}}}.
\end{equation*}

The (normalized) direction of the steepest descent on $T_X\mathcal{S}_n$ is then given by 
\begin{equation}\label{eq:steep}
\xi_X=\underset{\xi\in T_X\mathcal{S}_n, \|\xi\|_X = 1}{\operatorname{argmin}} \langle \nabla f(X), \xi\rangle_R, 
\end{equation}
which simplifies to 
\[
\xi_X  = -\mathrm{Proj}_{T_X\mathcal{S}_n} (\nabla f(X)),
\]
where $\mathrm{Proj}_{T_X\mathcal{S}_n} (\nabla f(X))$ denotes the projection of  $\nabla f(X)$ onto the tangent space $T_X\mathcal{S}_n$.

 From~\eqref{eq:tangent} and~\eqref{eq:orthogonal}, we observe that any matrix $H\in\mathbb{C}^{n\times n}$ can be uniquely decomposed into 
 components in  $T_X\mathcal{U}_m$ and $N_X\mathcal{U}_m$ as follows: 
\begin{eqnarray}\label{eq:proj}
    H &=&
X\left\{ \frac{1}{2}(X^*H - H^* X)\right\}+X\left\{ \frac{1}{2}(X^* H + H^* X)\right\}\nonumber\\
&=& X \mathrm{skew}(X^* H) + X \mathrm{herm}(X^*H),
\end{eqnarray}
where $\mathrm{herm}(A)$ and $\mathrm{skew}(A)$ are defined as 
$\mathrm{herm}(A) :=\frac{1}{2}(A+A^*)$ and $\mathrm{skew}(A):= \frac{1}{2}(A-A^*)$, respectively.  Using~\eqref{eq:proj}, we can explicitly express the projection of $\nabla f(X)$ at $X$ onto $T_X\mathcal{S}_m$ as 
\begin{equation}\label{eq:gradF}
\mathrm{Proj}_{T_X\mathcal{S}_n} (\nabla f(X)) = X \mathrm{skew}(X^* \nabla f(X)). 
\end{equation}

Besides the geometric properties of the Stiefel manifold, we characterize the collection of solutions \eqref{eq:prob1} that result in zero objective function value forms an equivalent class. {To begin, we define the concept of a non-degenerate eigenvalue, which is essential for understanding the structure of unitary transformations.}

\begin{definition}
    An eigenvalue of a matrix $A$ is said to be \emph{non-degenerate} if its corresponding eigenspace is one-dimensional.
\end{definition}
{Additionally, we introduce a relation between unitary matrices through conjugation by a unitary matrix and multiplication with a diagonal matrix.
}
\begin{definition}
   Let $V\in \mathbb{C}^n$ and $VV^* = V^*V = I$. If we say $U_1\#_VU_2$, then there must exist a diagonal matrix $D = \mbox{diag}(e^{i\theta_1}\cdots e^{i\theta_n})$ such that 
   \begin{eqnarray*}
       U_1 = U_2 VDV^*,
   \end{eqnarray*}
   where $\theta_i$s are real values.
\end{definition}
{This relationship can be directly demonstrated to satisfy the properties of an equivalence relation.}
\begin{property}
    $\#_V$ is an equivalent relation.
\end{property}
\begin{theorem}\label{thm:one-theorem}
Let $\sigma$ and $\rho$ be Hermitian matrices with non-degenerate eigenvalues. Suppose there exists a subset $\mathcal{G}$ of unitary matrices  such that  
\[
\sigma = U \rho U^* \quad \text{for every } U \in \mathcal{G}.
\]
{
Let $V$ denote the collection of eigenvectors of $\rho$. Then, if the eigenvectors of $\rho$ and $\sigma$ are arranged in the same order as in $V$, the set $\mathcal{G}$ forms an equivalence class under the relation $\#_{V}$.
}
\end{theorem}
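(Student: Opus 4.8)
The plan is to reduce the conjugation equation $\sigma = U\rho U^{*}$ to a commutation relation with a diagonal matrix of distinct eigenvalues, and then to read off the equivalence-class structure directly. I treat $\mathcal{G}$ as the complete set of unitary solutions of $\sigma = U\rho U^{*}$, since that is what must hold for it to equal a single equivalence class. First I would invoke the spectral theorem: because $\rho$ is Hermitian with non-degenerate (hence pairwise distinct) eigenvalues, write $\rho = V\Lambda V^{*}$, where $\Lambda = \mathrm{diag}(\lambda_{1},\dots,\lambda_{n})$ collects the distinct eigenvalues and the columns of $V$ are the corresponding eigenvectors. Since every $U\in\mathcal{G}$ is unitary, $\sigma = U\rho U^{*}$ is unitarily similar to $\rho$ and therefore shares its spectrum; arranging the eigenvectors of $\sigma$ in the same order as in $V$ lets me write $\sigma = W\Lambda W^{*}$ with the \emph{same} $\Lambda$, where $W$ is unitary.

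Next I would substitute both decompositions into $\sigma = U\rho U^{*}$ to obtain $W\Lambda W^{*} = UV\Lambda V^{*}U^{*}$, equivalently $Q\Lambda Q^{*} = \Lambda$ where $Q := W^{*}UV$ is unitary. This says exactly that $Q$ commutes with $\Lambda$. The key step is the standard fact that a matrix commuting with a diagonal matrix whose diagonal entries are pairwise distinct must itself be diagonal; this is precisely where the non-degeneracy hypothesis is used. Being both diagonal and unitary, $Q$ equals some $D = \mathrm{diag}(e^{i\theta_{1}},\dots,e^{i\theta_{n}})$, and solving for $U$ yields the explicit parametrization $U = WDV^{*}$. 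Hence $\mathcal{G} = \{\,WDV^{*} : D\ \text{diagonal unitary}\,\}$.

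Finally I would verify the equivalence-class claim from this parametrization. Given $U_{1} = WD_{1}V^{*}$ and $U_{2} = WD_{2}V^{*}$ in $\mathcal{G}$, a direct computation gives $U_{2}\,V(D_{2}^{*}D_{1})V^{*} = WD_{2}(D_{2}^{*}D_{1})V^{*} = WD_{1}V^{*} = U_{1}$, so $U_{1}\#_{V}U_{2}$ with the diagonal unitary $D = D_{2}^{*}D_{1}$; thus any two members of $\mathcal{G}$ are related. Conversely, if $U_{2} = WD_{2}V^{*}\in\mathcal{G}$ and $U_{1}\#_{V}U_{2}$, i.e. $U_{1} = U_{2}VDV^{*}$ for some diagonal unitary $D$, then $U_{1} = W(D_{2}D)V^{*}\in\mathcal{G}$, so $\mathcal{G}$ is closed under $\#_{V}$. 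Combined with the already-established Property that $\#_{V}$ is an equivalence relation, these two facts show $\mathcal{G}$ is precisely one equivalence class.

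The main obstacle I anticipate is bookkeeping around the phase and ordering ambiguities rather than any deep difficulty. The spectral factors $V$ and $W$ are each determined only up to right multiplication by a diagonal unitary, so I must check that the set $\{WDV^{*}\}$ is independent of these choices — it is, since absorbing a phase into $W$ or $V$ merely relabels $D$ — and I must make explicit that the ``same order'' hypothesis is exactly what forces the two spectral factors to be the identical $\Lambda$; without it, $Q$ would only be constrained to be a generalized permutation matrix rather than genuinely diagonal, and the clean parametrization $U = WDV^{*}$ would fail.
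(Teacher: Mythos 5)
Your proof is correct, and it takes a structurally different route from the paper's. The paper argues pairwise: for $U_1,U_2\in\mathcal{G}$ it forms $U_2^*U_1$, observes that it commutes with $\rho$, and applies non-degeneracy eigenvector-by-eigenvector to conclude $U_2^*U_1 = VDV^*$, hence $U_1 = U_2VDV^*$; the spectral decomposition of $\sigma$ is never needed. You instead parametrize the whole solution set: writing $\rho = V\Lambda V^*$ and $\sigma = W\Lambda W^*$ (the ordering hypothesis forcing the same $\Lambda$), you reduce the conjugation equation to $Q\Lambda = \Lambda Q$ with $Q = W^*UV$ unitary, obtain $Q = D$ diagonal, and conclude $\mathcal{G} = \{WDV^* : D \text{ diagonal unitary}\}$. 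The two proofs share the same engine — non-degeneracy forces a unitary commuting with $\rho$ (equivalently $\Lambda$) to be diagonal in the eigenbasis — but your parametrization buys something the paper's argument does not: it lets you prove \emph{both} inclusions, namely that members of $\mathcal{G}$ are pairwise $\#_V$-related \emph{and} that $\mathcal{G}$ is closed under $\#_V$, so $\mathcal{G}$ is exactly one equivalence class rather than merely contained in one. This extra direction does require your explicit (and reasonable) reading that $\mathcal{G}$ is the complete set of unitary solutions — as stated, the theorem only posits a subset of solutions, and the paper's proof correspondingly establishes only containment in a class. Your closing remark about the ordering hypothesis is also on point: without it, $Q$ would only be a unitary mapping eigenspaces of $\rho$ to equal-eigenvalue eigenspaces of $\sigma$ arranged differently, i.e.\ a phased permutation, and the clean diagonal parametrization would fail.
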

\begin{proof}
    Suppose there exist two unitary matrices $U_1$ and $U_2$ such that
    \begin{eqnarray*}
    \sigma &=& U_1 \rho U_1^* = U_2 \rho U_2^*.
    \end{eqnarray*}
    Then,
    \begin{eqnarray*}
    U_1 \rho U_1^* &=& U_2 \rho U_2^*.
    \end{eqnarray*}
    Multiplying on the left by $U_2^*$ and on the right by $U_1$ yields
    \begin{eqnarray*}
    U_2^* U_1 \rho &=& \rho\, U_2^* U_1.
    \end{eqnarray*}
    Since $\rho$ is Hermitian, it is diagonalizable. 
    {Let $(\lambda, v)$ be an eigenpair of $\rho$ satisfying $\rho v = \lambda v$. Then,
    \begin{eqnarray*}
    \rho\, (U_2^* U_1 v) = U_2^* U_1 \rho v = \lambda \, (U_2^* U_1 v).
    \end{eqnarray*}
    }
    This shows that $U_2^* U_1 v$ is also an eigenvector of $\rho$ corresponding to the eigenvalue $\lambda$. Because the eigenvalue $\lambda$ is non-degenerate, its eigenspace is one-dimensional, and hence
    \begin{eqnarray*}
    U_2^* U_1 v &=& e^{i\theta} v
    \end{eqnarray*}
    for some real $\theta$ that may depend on $v$. Since this holds for every eigenvector $v$ of $\rho$ (with the eigenvectors ordered consistently), the matrix $U_2^* U_1$ can be expressed as 
    \begin{eqnarray*}
        U_2^* U_1 V = V D \Rightarrow U_1 = U_2 V D V^*,
    \end{eqnarray*}
    where $D = \mbox{diag}(e^{i\theta_1}\cdots e^{i\theta_n})$. 
{Thus, we demonstrate that if $U_1$ and $U_2$ are elements of $\mathcal{G}$, then $U_1 \#_V U_2$.}
\end{proof}

Finally, we now show that if a channel can be represented as
    \begin{eqnarray*}
        \Phi(\rho)=U\rho U^*,
    \end{eqnarray*}
    by a unitary $U$, this representation is unique up to a global phase. In other words, if two unitary matrices $U$ and $V$ define the same channel, they differ only by a complex scalar of modulus one.
    \begin{theorem}\label{thm:global-theorem}
        Suppose $U$ and $V$ are unitary matrices such that
        \begin{eqnarray*}
            U\rho U^* = V\rho V^*
        \end{eqnarray*}
        for every Hermitian matrix $\rho$ with $\mbox{Tr}(\rho)=1$. Then, there exists a scalar $\mu$ with $|\mu|=1$ such that
        \begin{eqnarray*}
            U = \mu V.
        \end{eqnarray*}
    \end{theorem}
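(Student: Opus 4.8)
The plan is to reduce the two-unitary hypothesis to a statement about the centralizer of the full matrix algebra. First I would introduce $W := V^{*}U$, which is unitary because it is a product of unitaries. Multiplying the identity $U\rho U^{*} = V\rho V^{*}$ on the left by $V^{*}$ and on the right by $V$ gives $W\rho W^{*} = \rho$ for every Hermitian $\rho$ with $\operatorname{Tr}(\rho)=1$; and since $W$ is unitary, $W\rho W^{*}=\rho$ is equivalent to the commutation relation $W\rho = \rho W$ on this same family of matrices.

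The second, and really the only substantive, step is to upgrade ``$W$ commutes with every trace-one Hermitian matrix'' to ``$W$ commutes with every matrix in $\mathbb{C}^{n\times n}$.'' The mild subtlety is that the trace-one Hermitian matrices form an \emph{affine} rather than a linear family, so I would first pass to differences. Concretely, for any traceless Hermitian $B$, both $\tfrac{1}{n}I_{n}$ and $\tfrac{1}{n}I_{n}+B$ have trace one, so $W$ commutes with each of them, hence with their difference $B$, and also with $I_{n}$; therefore $W$ commutes with every Hermitian matrix via the splitting $A = \tfrac{\operatorname{Tr}(A)}{n}I_{n} + \bigl(A-\tfrac{\operatorname{Tr}(A)}{n}I_{n}\bigr)$. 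Finally, writing an arbitrary $M\in\mathbb{C}^{n\times n}$ as $M=\operatorname{herm}(M)+\operatorname{skew}(M)$ and noting that both $\operatorname{herm}(M)$ and $-i\,\operatorname{skew}(M)$ are Hermitian, I conclude that $W$ commutes with every complex matrix.

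The final step is then immediate: an element commuting with all of $\mathbb{C}^{n\times n}$ lies in its center, which consists exactly of the scalar matrices, so $W = \mu I_{n}$ for some $\mu\in\mathbb{C}$. Unitarity of $W$ forces $|\mu|^{2}=1$, i.e.\ $|\mu|=1$, and substituting back $W = V^{*}U$ yields $U = \mu V$, as claimed. I expect no genuine obstacle here; the one place demanding care is the affine-versus-linear point in the second step, where the trace-one normalization must be removed by taking differences before the standard ``centralizer is scalar'' argument applies. It is also worth remarking that this is a global-phase strengthening of Theorem~\ref{thm:one-theorem}: quantifying over \emph{all} density matrices, rather than a single non-degenerate $\rho$, collapses the diagonal freedom $D=\operatorname{diag}(e^{i\theta_{1}},\dots,e^{i\theta_{n}})$ down to a single common phase $\mu$.
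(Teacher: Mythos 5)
Your proposal is correct, and it shares the paper's opening reduction: both you and the authors set $W=V^{*}U$ (the paper calls it $M$) and convert the hypothesis into the commutation relation $W\rho=\rho W$ for all trace-one Hermitian $\rho$. Where you diverge is in how you conclude that $W$ is scalar. The paper proceeds by explicit computation against standard-basis matrices: commutation with $e_ie_i^{T}$ forces $W$ to be diagonal, and commutation with $e_ie_j^{T}+e_je_i^{T}$ forces all diagonal entries to coincide, after which unitarity gives $|\mu|=1$. You instead upgrade the commutation to all Hermitian matrices, then to all of $\mathbb{C}^{n\times n}$ via $M=\operatorname{herm}(M)+i\bigl(-i\operatorname{skew}(M)\bigr)$, and invoke the fact that the center of the full matrix algebra consists of the scalar matrices. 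Both routes are valid, but your treatment of the affine constraint is actually more careful than the paper's: the matrices $e_ie_j^{T}+e_je_i^{T}$ used in the paper's second step have trace zero, so they do not literally belong to the hypothesis family, and the paper never justifies testing against them. Your trick of taking differences of the trace-one matrices $\tfrac{1}{n}I_n$ and $\tfrac{1}{n}I_n+B$ (for $B$ traceless Hermitian) is precisely the missing justification; the same device would repair the paper's argument. In exchange, the paper's computation is more elementary and self-contained, whereas yours leans on the standard (but unproved here) fact that the centralizer of $\mathbb{C}^{n\times n}$ is $\mathbb{C}I_n$ — a fact that is itself usually proved by exactly the kind of basis computation the paper performs.
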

    \begin{proof}
        Since
        \begin{eqnarray*}
        U\rho U^* = V\rho V^*
        \end{eqnarray*}
        for all such $\rho$, define the matrix $M = V^* U$, then we have  $M\rho = \rho M$ 
        for every Hermitian matrix $\rho$ with its trace equal to 1.
        
        Choose the standard basis $\{e_i\}_{i=1}^n$ of $\mathbb{R}^n$. Any Hermitian matrix $\rho$ with trace $1$ can be expressed on this basis as
        \begin{eqnarray*}
        \rho = \sum_{i=1}^n \rho_{ii}\, e_i e_i^T + \sum_{i<j} \rho_{ij}\, (e_i e_j^T + e_j e_i^T),
        \end{eqnarray*}
        with $\sum_{i=1}^n \rho_{ii}=1$ and each $\rho_{ij}\in\mathbb{R}$.
        Express $M$ in the same basis:
        \begin{eqnarray*}
        M = \sum_{p,q=1}^n M_{pq}\, e_p e_q^T.
        \end{eqnarray*}
        Now, the commutation relation $M\rho = \rho M$ must hold for all such $\rho$. First, we are allowed to consider the effect on the matrices $e_i e_i^T$:
        \begin{eqnarray*}
        M\, e_i e_i^T = e_i e_i^T\, M.
        \end{eqnarray*}
        A straightforward calculation using the above representations shows that this forces $M_{pq} = 0$ whenever $p \neq q$; that is, $M$ must be diagonal.\\
        Next, look at the matrices $e_i e_j^T + e_j e_i^T$ for $i\neq j$. The relation
        \begin{eqnarray*}
        M(e_i e_j^T+e_j e_i^T) = (e_i e_j^T+e_j e_i^T) M
        \end{eqnarray*}
        implies that the diagonal entries of $M$ satisfy $M_{ii} = M_{jj}$ for all $i, j$.\\
        Finally, because $U$ and $V$ are unitary, so is $M = V^* U$. Hence, we have
        \begin{eqnarray*}
        MM^* = I,
        \end{eqnarray*}
        which implies that each diagonal entry of $M$ must have absolute value one; that is, $|M_{ii}|=1$.\\
        Since $M$ is diagonal with all entries equal to the same complex number of modulus one, we can write $M=\mu I$ with $|\mu|=1$. Therefore,
        \begin{eqnarray*}
        V^* U = \mu I \quad \Longrightarrow \quad U = \mu V,
        \end{eqnarray*}
        completing the proof.
    \end{proof}

%
%
%

\section{Iterative methodology}
To efficiently compute the optimal solution, it is essential to introduce a critical result that forms the foundation of our subsequent argument. For clarity in the following discussion, we also use $\mathfrak {Re}(X)$ and $\mathfrak{Im}(X)$ to denote the real and imaginary parts of any matrix $X$, respectively.

\begin{lemma}\label{lemma:derivative}
 Given two Hermitian matrices $A$  and $B$, let $h:\mathbb{C}^{n\times n} \rightarrow \mathbb{R}$ be a function denoted by
\begin{equation*}
 h(X) = \mathfrak{Re}(\inner{A, XBX^*}).
\end{equation*}
Then the partial derivatives of $h$ with respect to variables $X_{\mathfrak{Re}}$ and $X_{\mathfrak{Im}}$  are  
\begin{equation}
\left\{ \begin{array}{rcl}
  \dfrac{\partial h}{\partial X_{\mathfrak{Re}}} &=& 
  2 \mathfrak{Re}(AXB),\\[0.3cm]
        \dfrac{\partial h}{\partial X_{\mathfrak{Im}}} &=& 
        2 \mathfrak{Im}(AXB).
\end{array} \right. 
 \label{eq:gradient_h}
\end{equation} 
\end{lemma}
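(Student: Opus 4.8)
The plan is to reduce the lemma to a single directional-derivative computation and then recover the two partial derivatives by testing that derivative against purely real and purely imaginary perturbation directions, using the chain-rule identity \eqref{eq:DFT}. First I would unpack the inner product appearing in the definition of $h$. Since the Hermitian inner product is $\inner{A,C}=\operatorname{tr}(A^*C)$ and $A$ is Hermitian, so that $A^*=A$, we may write
\[
h(X)=\mathfrak{Re}\bigl(\operatorname{tr}(A X B X^*)\bigr).
\]
I would then compute the full directional derivative $Dh(X)[H]$ along an arbitrary complex direction $H$ by expanding $h(X+tH)$ and differentiating at $t=0$. Because $h$ depends quadratically on $X$, this produces exactly two linear terms, $\operatorname{tr}(AHBX^*)$ and $\operatorname{tr}(AXBH^*)$.

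The algebraic observation that makes the argument collapse cleanly is that these two trace terms are complex conjugates of one another. Using $\operatorname{tr}(M^*)=\overline{\operatorname{tr}(M)}$, the Hermiticity $A^*=A,\ B^*=B$, and the cyclic invariance of the trace, one verifies that $(AXBH^*)^*=HBX^*A$, and hence $\operatorname{tr}(AXBH^*)=\overline{\operatorname{tr}(AHBX^*)}$. Adding a complex number to its conjugate yields twice the real part, so
\[
Dh(X)[H]=2\,\mathfrak{Re}\bigl(\operatorname{tr}(AHBX^*)\bigr),
\]
which is already real and therefore unaffected by the outer $\mathfrak{Re}$ in the definition of $h$.

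Finally I would extract the two partial derivatives by specializing $H$ and comparing with \eqref{eq:DFT}. Taking $\gamma(t)=X+tH$ gives $\dot\gamma_{\mathfrak{Re}}(0)=\mathfrak{Re}(H)$ and $\dot\gamma_{\mathfrak{Im}}(0)=\mathfrak{Im}(H)$, so the choice $H=P$ with $P$ real isolates the variation in $X_{\mathfrak{Re}}$, while $H=iQ$ with $Q$ real isolates the variation in $X_{\mathfrak{Im}}$ and introduces the factor $\mathfrak{Re}(i z)=-\mathfrak{Im}(z)$. Writing $M:=BX^*A$ and reading off the gradient entrywise against the real pairing $\operatorname{tr}(G^\top\,\cdot)$ used in \eqref{eq:DFT} gives $\partial h/\partial X_{\mathfrak{Re}}=2\,\mathfrak{Re}(M^\top)$ and $\partial h/\partial X_{\mathfrak{Im}}=-2\,\mathfrak{Im}(M^\top)$. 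I would then convert these to the claimed form using the identity $\overline{M^\top}=\overline{A^\top\overline{X}B^\top}=AXB$, valid again by Hermiticity of $A$ and $B$; this gives $\mathfrak{Re}(M^\top)=\mathfrak{Re}(AXB)$ and $\mathfrak{Im}(M^\top)=-\mathfrak{Im}(AXB)$, which is exactly \eqref{eq:gradient_h}.

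The main obstacle I anticipate is purely the bookkeeping of conjugate transposes, in two places: establishing the conjugacy of the two trace terms, and, more delicately, translating the trace pairing into an entrywise gradient so that the transpose and the complex conjugation land on the correct factors of $M$. The single spot most prone to an arithmetic slip is the sign in the imaginary-part computation, where the $-\mathfrak{Im}$ coming from multiplication by $i$ must combine correctly with the $-\mathfrak{Im}$ coming from the conjugation identity $\overline{M^\top}=AXB$; a missed sign there would incorrectly flip the sign of the second formula in \eqref{eq:gradient_h}.
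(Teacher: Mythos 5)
Your proposal is correct: the expansion of $h(X+tH)$ produces the two linear terms $\operatorname{tr}(AHBX^*)$ and $\operatorname{tr}(AXBH^*)$, these are indeed complex conjugates (via $(AXBH^*)^*=HBX^*A$, Hermiticity, and cyclicity of the trace), and your sign bookkeeping for the imaginary direction is right — with $M=BX^*A$ one has $M^\top=A^\top\overline{X}B^\top=\overline{AXB}$, so $-2\,\mathfrak{Im}(M^\top)=+2\,\mathfrak{Im}(AXB)$, matching \eqref{eq:gradient_h}. However, your route is genuinely different from the paper's. The paper proves the lemma with Wirtinger (CR) calculus: it rewrites $h$ using the formal bilinear pairing $\langle X,Y\rangle_{\mathbb{R}}=\sum_{i,j}x_{ij}y_{ij}$, treats $X$ and $\overline{X}$ as independent variables, computes the two formal Fr\'echet derivatives $\frac{\partial h}{\partial X}.\Delta X=\langle\overline{A}\,\overline{X}\,\overline{B},\Delta X\rangle_{\mathbb{R}}$ and $\frac{\partial h}{\partial\overline{X}}.\Delta X=\langle AXB,\Delta X\rangle_{\mathbb{R}}$, and then invokes the Wirtinger identities $\frac{\partial h}{\partial X_{\mathfrak{Re}}}=\frac{\partial h}{\partial X}+\frac{\partial h}{\partial\overline{X}}$ and $\frac{\partial h}{\partial X_{\mathfrak{Im}}}=i\left(\frac{\partial h}{\partial X}-\frac{\partial h}{\partial\overline{X}}\right)$ to land directly on $2\,\mathfrak{Re}(AXB)$ and $2\,\mathfrak{Im}(AXB)$. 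Your argument instead computes the real directional derivative from first principles and isolates the two partials by testing with $H=P$ (real) and $H=iQ$; it needs nothing beyond trace identities and the pairing convention of \eqref{eq:DFT}. What the paper's formalism buys is compactness and immediate output in the stated form, with no detour through $M^\top=\overline{AXB}$; what yours buys is self-containedness — it avoids the step of treating $X$ and $\overline{X}$ as independent variables, which the paper uses without justification — at the cost of the heavier transpose/conjugation bookkeeping you correctly identified as the error-prone part. Both arguments ultimately rest on the same fact: the differential splits into holomorphic and antiholomorphic parts that are conjugates of one another.
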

\begin{proof}

Rewrite $h(X)$ as
\begin{displaymath}
h(X)= \frac{1}{2}  \left(\langle  A, \overline{X}\overline{B}X^\top \rangle_{\mathbb{R}} + \langle  XBX^*, \overline{A}
 \rangle_{\mathbb{R}} \right) 
\end{displaymath}
where
\begin{equation*}
\langle X,Y \rangle_{\mathbb{R}}:=\sum_{i,j=1}^{n}
x_{ij}y_{ij} \label{eq:real_inner_product}
\end{equation*}
stands for the formal inner product for two matrices $X$ and $Y$ over the real field.

Considering $X$ and $\overline{X}$ as independent variables,
we  formally take the Fr\'{e}chet derivative of $h$ as an action on an arbitrary ${\Delta X} \in \mathbb{C}^{n\times n}$. Denoting this action by $.$, we obtain by direct computation that 
\begin{eqnarray*}
\frac{\partial h}{\partial X}.{\Delta X} &=&
\langle \overline{A}\overline{X}\overline{B},  \Delta X\rangle_{\mathbb{R}},  \\
\frac{\partial h}{\partial \overline{X}}.{\Delta X} &=&
\langle A X B,  \Delta X\rangle_{\mathbb{R}}. \label{eq:Frechet}
\end{eqnarray*}
According the Wirtinger calculus, the partial derivatives of $h$ with respect to $X_{\mathfrak{Re}}$  and $X_{\mathfrak{Im}}$ can be evaluated  as
\begin{eqnarray*}
\frac{\partial h}{\partial X_{\mathfrak{Re}}} &=& \frac{\partial h}{\partial X} + \frac{\partial h}{\partial \overline{X}} = 2 \mathfrak{Re}(AXB),\\
\frac{\partial h}{\partial X_{\mathfrak{Im}}} &=& i\left(\frac{\partial h}{\partial X} - \frac{\partial h}{\partial \overline{X}}\right) = 2 \mathfrak{Im}(AXB). \label{eq:Frechet2}
\end{eqnarray*}
\end{proof}

Since $g(U) =\dfrac{1}{2}\displaystyle
\|\sigma - U \rho U^* \|_F^2$ is the objective function in Problem~\ref{eq:prob1}. Through direct computation, we obtain 
\begin{equation}\label{eq:objg}
g(U) = \frac{1}{2}(\|\sigma\|_F^2+\|\rho\|_F^2 - {\color{black}{2}}\mathfrak{Re}(\inner{\sigma, U\rho U^*}) ). 
\end{equation} 
Using the result from Lemma~\ref{lemma:derivative}, we can derive the Euclidean gradient of the objective function $g(U)$ as follows:
\begin{theorem}
    In Problem~\ref{eq:prob1} , the Euclidean derivative of objective function $g(U)$ 
    with respect to real and imaginary components of    
    $U$ admit the following form: 
    {\color{black}
    \begin{equation}\label{eq:par2}
        \left\{
            \begin{array}{rl}
                \dfrac{\partial g}{\partial U_{\mathfrak{Re}}} &=  - {\color{black}{2}}\mathfrak{Re}\left(\sigma U \rho\right),
                \\
                \dfrac{\partial g}{\partial U_{\mathfrak{Im}}} &= - {\color{black}{2}}\mathfrak{Im}\left(\sigma U \rho\right).
            \end{array}
        \right.
    \end{equation}
    }
\end{theorem}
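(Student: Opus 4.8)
The plan is to obtain both partial derivatives as an immediate corollary of Lemma~\ref{lemma:derivative}, applied to the expanded form of $g$ already recorded in~\eqref{eq:objg}. Starting from
\[
g(U) = \frac{1}{2}\left(\|\sigma\|_F^2 + \|\rho\|_F^2 - 2\mathfrak{Re}(\inner{\sigma, U\rho U^*})\right),
\]
I would first observe that the two norm terms $\|\sigma\|_F^2$ and $\|\rho\|_F^2$ carry no dependence on $U$, so their contributions to $\partial g/\partial U_{\mathfrak{Re}}$ and $\partial g/\partial U_{\mathfrak{Im}}$ vanish. The entire $U$-dependence is therefore concentrated in the single term $-\mathfrak{Re}(\inner{\sigma, U\rho U^*})$, and since $g$ depends on $U$ only through this term, the differentiation is linear and requires no product or chain rule against the constant terms.

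The central step is to recognize $\mathfrak{Re}(\inner{\sigma, U\rho U^*})$ as precisely the function $h$ of Lemma~\ref{lemma:derivative} under the substitutions $A=\sigma$, $B=\rho$, and $X=U$; that is, setting $h(U)=\mathfrak{Re}(\inner{\sigma, U\rho U^*})$, we have $g(U)=\text{const}-h(U)$. Because $\sigma$ and $\rho$ are Hermitian (indeed Hermitian positive definite, by the standing hypotheses of Problem~\ref{eq:prob1}), the lemma applies verbatim and yields $\partial h/\partial U_{\mathfrak{Re}} = 2\mathfrak{Re}(\sigma U \rho)$ and $\partial h/\partial U_{\mathfrak{Im}} = 2\mathfrak{Im}(\sigma U \rho)$. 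Propagating the leading minus sign from $g=\text{const}-h$ through both expressions then gives exactly the two claimed identities.

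There is no substantive obstacle here: the argument is essentially a one-line substitution into the already-established Lemma~\ref{lemma:derivative}. The only care required is bookkeeping, namely to confirm that the Hermiticity hypotheses on $A$ and $B$ are indeed met by $\sigma$ and $\rho$, to match $A$, $B$, $X$ to $\sigma$, $\rho$, $U$ in the correct operand positions, and to carry the overall factor $-1$ consistently through both the real and imaginary parts. Once these matchings are verified, the theorem follows immediately.
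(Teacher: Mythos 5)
Your proposal is correct and is essentially identical to the paper's own derivation: the paper likewise states the theorem as an immediate consequence of Lemma~\ref{lemma:derivative} applied to the expanded form \eqref{eq:objg} with $A=\sigma$, $B=\rho$, $X=U$, the constant terms dropping out and the minus sign carried through. Your bookkeeping of the hypotheses (Hermiticity of $\sigma,\rho$) and of the sign is exactly what the paper leaves implicit.
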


{\color{black}
Since the objective function $g$ is constrained to the set of unitary matrices $\mathcal{S}_n$, the first-order optimality condition for the approximation problem~\ref{eq:prob1} can be derived by applying by projecting the Euclidean gradient onto $\mathcal{S}_n$. The following theorem shows that one can obtain the critical point of $g$ by simply applying the polar decomposition of $-\nabla g$ given by
\begin{equation*}
     -\nabla  g (U) = 
\dfrac{\partial g(U)}{\partial U_{\mathfrak{Re}}}+i\dfrac{\partial g(U)}{\partial U_{\mathfrak{Im}}} =  {\color{black}{2}} \sigma U \rho.  
\end{equation*}
\begin{theorem}\label{lemma:polarcri}
 {Let $g: \mathcal{S}_n \rightarrow \mathbb{R}$ be the objective function in Problem~\ref{eq:prob1} and $U$ be a unitary matrix.  Assume that a Hermitian positive definite matrix $P$ exists such that $UP = -\nabla g(U)$. Then} $U$ is a critical point of the function $g$ constrained to $\mathcal{S}_n$.
\end{theorem}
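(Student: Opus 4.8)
The plan is to show that the hypothesis $UP = -\nabla g(U)$ forces the projected (Riemannian) gradient of $g$ at $U$ to vanish, which is exactly the first-order optimality condition characterizing a critical point on the Stiefel manifold. From the projection formula~\eqref{eq:gradF}, the projected gradient at $U$ is
\[
\mathrm{Proj}_{T_U\mathcal{S}_n}(\nabla g(U)) = U\,\mathrm{skew}(U^*\nabla g(U)),
\]
so $U$ is a critical point precisely when this quantity is zero.

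First I would observe that, since $U$ is unitary and hence invertible, the equation $U\,\mathrm{skew}(U^*\nabla g(U)) = 0$ is equivalent to $\mathrm{skew}(U^*\nabla g(U)) = 0$; that is, $U^*\nabla g(U)$ must be Hermitian. The task therefore reduces to verifying Hermiticity of $U^*\nabla g(U)$ under the stated hypothesis.

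Next I would substitute the hypothesis directly. Writing $\nabla g(U) = -UP$ and using $U^*U = I_n$, a one-line computation gives
\[
U^*\nabla g(U) = -U^*UP = -P.
\]
Because $P$ is Hermitian, so is $-P$, and hence $\mathrm{skew}(U^*\nabla g(U)) = \mathrm{skew}(-P) = 0$. Multiplying on the left by $U$ yields $\mathrm{Proj}_{T_U\mathcal{S}_n}(\nabla g(U)) = 0$, so $U$ is a critical point of $g$ constrained to $\mathcal{S}_n$.

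There is no genuine computational obstacle here; the entire content lies in correctly identifying the critical-point condition on $\mathcal{S}_n$ with the vanishing of the skew-Hermitian part of $U^*\nabla g(U)$, as supplied by~\eqref{eq:gradF}, and then reading off that condition from the hypothesis. I would emphasize one subtlety worth noting in the write-up: only the \emph{Hermiticity} of $P$ is actually used in the argument, so the positive definiteness assumed in the statement is not required for the critical-point conclusion itself. It is included because $P$ is the Hermitian positive semidefinite factor in the polar decomposition of $-\nabla g(U) = 2\sigma U\rho$, and it is precisely this decomposition that the iterative scheme exploits to update $U$ toward the unitary polar factor.
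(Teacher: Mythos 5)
Your proof is correct and follows essentially the same route as the paper: both apply the tangent-space projection formula~\eqref{eq:gradF} and observe that the hypothesis $UP=-\nabla g(U)$ forces $U^*\nabla g(U)=-P$ to be Hermitian, so its skew part (and hence the projected gradient) vanishes. Your added remark that only the Hermiticity of $P$, not its positive definiteness, is needed is accurate and a worthwhile observation.
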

\begin{proof}
It follows from~\eqref{eq:proj} that the projection of $-\nabla g(U)$ onto $\mathcal{S}_n$ can be computed as
\begin{equation*}
U\left\{ \frac{1}{2} \left(U^* \left(-\nabla g(U)\right) - \left(-\nabla g(U)\right)^* U \right)\right\}.
\end{equation*}
Given the condition $UP = -\nabla g(U)$, we have
    \begin{equation*}
        U^*\left(-\nabla g(U)\right)- \left(-\nabla g(U)\right)^* U = -P+(UP)^* U = 0.
    \end{equation*}
This confirms that if $U$ satisfies $UP = -\nabla g(U)$, then $U$ is a critical point of the objective function $g$.
\end{proof}

%
%

Based on Theorem~\ref{lemma:polarcri}, we propose an algorithm to produce a convergent sequence to determine a critical point of the objective function $g$ on $\mathcal{S}_n$. 
By employing the notation ``\emph{poldec}" to denote the polar decomposition of a given matrix, we present the iterative procedure in Algorithm~\ref{alg:poldec}, which systematically refines the solution through successive iterations. The convergence properties of the algorithm, along with the conditions that guarantee convergence to an optimal solution, will be discussed in detail in the next section.
}

\begin{algorithm}[!htt]
\caption{}\label{alg:poldec}
\begin{algorithmic}
\Require A pair of Hermitian positive {\color{black}definite} matrices $(\sigma, \rho)$ and an initial unitary matrix $U^{(0)}$. A positive number $M$ to terminate the algorithm.
\Ensure: {\color{black}An approximation of the quantum state $ \sigma$ after the unitary channel $U$ is applied to the state $\rho$.}

\\\hrulefill
\For {$s = 0, 1 \cdots , M$}
    
    \State $[U^{(s+1)}, P^{(s+1)}] = {\color{black}
    \mbox{{poldec}}\left( 2\sigma U^{(s)}\rho  \right)
    }$. 
    \\
    \If {$\dfrac{1}{2}\left\|\sigma - U^{(s+1)}\rho \left(U^{(s+1)}\right)^*\right\|_F^2<TOL$}
    \State \textbf{Break}
    \EndIf
\EndFor
\end{algorithmic}
\end{algorithm}


\section{Convergence Analysis}

To ensure that the proposed algorithm effectively decreases the objective function 
\[
g(U) = \frac{1}{2} \left\|\sigma - U \rho U^*\right\|_F^2,
\]
and converges uniquely to an optimal unitary matrix $\hat{U}$ within the set of unitary matrices $\mathcal{S}_n$, we must consider a crucial property related to polar decomposition. This property demonstrates that for any matrix $X \in \mathbb{C}^{n \times n}$, its polar decomposition provides the closest matrix in $ \mathcal{S}_n$ to itself.

\begin{lemma}\label{lem:polar}~\cite[Corollary 2.3]{Higham86}
For any matrix $X \in \mathbb{C}^{n \times n}$, let $X = UP$  be its polar decomposition, where $U \in \mathcal{S}_n$ and $P \in \mathbb{C}^{n \times n}$ is Hermitian and positive definite. Then, $U$ is the matrix in $ \mathcal{S}_n$ that is closest to $X$, i.e., for any $Z \in \mathcal{S}_n$,
\begin{equation}\label{eq:polardistance}
\|X - U\|_F \leq \|X - Z\|_F.
\end{equation}
\end{lemma}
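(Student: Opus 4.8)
The plan is to recast the problem of minimizing $\|X - Z\|_F$ over $Z \in \mathcal{S}_n$ as the maximization of a linear trace functional, and then to exploit the positivity of the Hermitian factor $P$. First I would expand the squared objective: for any unitary $Z$,
\[
\|X - Z\|_F^2 = \operatorname{tr}(X^*X) - 2\mathfrak{Re}\big(\operatorname{tr}(X^* Z)\big) + \operatorname{tr}(Z^* Z) = \|X\|_F^2 + n - 2\mathfrak{Re}\big(\operatorname{tr}(X^* Z)\big),
\]
where I have used $\operatorname{tr}(Z^* Z) = \operatorname{tr}(I_n) = n$. Since the first two terms are independent of $Z$, minimizing the distance is equivalent to maximizing $\mathfrak{Re}(\operatorname{tr}(X^* Z))$ over $Z \in \mathcal{S}_n$. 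Substituting the polar decomposition $X = UP$ and writing $W := U^* Z$ (which is unitary, being a product of unitaries), the Hermiticity of $P$ gives $\operatorname{tr}(X^* Z) = \operatorname{tr}(P U^* Z) = \operatorname{tr}(PW)$, so the task reduces to showing $\mathfrak{Re}(\operatorname{tr}(PW)) \le \operatorname{tr}(P)$ for every unitary $W$, with equality at $W = I_n$.

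Next I would diagonalize $P$. Because $P$ is Hermitian positive definite, write $P = Q \Lambda Q^*$ with $Q$ unitary and $\Lambda = \operatorname{diag}(\lambda_1, \dots, \lambda_n)$, where each $\lambda_i > 0$. Setting $\widetilde{W} := Q^* W Q$, which is again unitary, the cyclic property of the trace yields $\operatorname{tr}(PW) = \operatorname{tr}(\Lambda \widetilde{W}) = \sum_{i=1}^n \lambda_i \widetilde{W}_{ii}$. Since the columns of the unitary matrix $\widetilde{W}$ have unit norm, each diagonal entry obeys $|\widetilde{W}_{ii}| \le 1$, and therefore
\[
\mathfrak{Re}\big(\operatorname{tr}(PW)\big) = \sum_{i=1}^n \lambda_i \, \mathfrak{Re}(\widetilde{W}_{ii}) \le \sum_{i=1}^n \lambda_i \, |\widetilde{W}_{ii}| \le \sum_{i=1}^n \lambda_i = \operatorname{tr}(P).
\]
The choice $Z = U$ gives $W = I_n$, hence $\operatorname{tr}(X^* U) = \operatorname{tr}(P)$, which is real, so equality is attained. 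Combining this with the first step shows $\|X - U\|_F^2 \le \|X - Z\|_F^2$ for all $Z \in \mathcal{S}_n$, which is the claim.

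The step I expect to be the crux is the chain of inequalities in the displayed estimate, and in particular the role of positivity. The bound $\mathfrak{Re}(\widetilde{W}_{ii}) \le |\widetilde{W}_{ii}| \le 1$ must be weighted by the eigenvalues $\lambda_i$, and it is precisely the sign condition $\lambda_i > 0$ (i.e. the positive definiteness of $P$) that lets me pass from the weighted sum to $\sum_i \lambda_i = \operatorname{tr}(P)$; were $P$ allowed indefinite eigenvalues, the inequality would fail. A secondary point worth stating carefully is the reduction $\operatorname{tr}(X^*Z) = \operatorname{tr}(PW)$, which relies on $P^* = P$ and on $U^*U = I_n$, and the verification that $W$ and $\widetilde{W}$ remain unitary so that the diagonal-entry bound applies. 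The remaining manipulations are routine trace identities.
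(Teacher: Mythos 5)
Your proof is correct. Note that the paper itself offers no proof of this lemma: it is quoted directly from Higham's paper (Corollary 2.3 of the cited reference), so your argument is a genuine self-contained replacement rather than a variant of anything in the text. Compared with Higham's original proof, which works from the singular value decomposition $X = W_1 \Sigma W_2^*$ and bounds $\mathfrak{Re}\left(\operatorname{tr}(X^* Z)\right)$ by the sum of the singular values, your route through the polar factor --- reducing to $\operatorname{tr}(PW)$ and then diagonalizing the Hermitian matrix $P$ --- is the same estimate in different clothing, since the eigenvalues of $P$ are precisely the singular values of $X$; both proofs hinge on the linearization $\|X - Z\|_F^2 = \|X\|_F^2 + n - 2\mathfrak{Re}\left(\operatorname{tr}(X^* Z)\right)$ and the diagonal-entry bound $|\widetilde{W}_{ii}| \le 1$ for unitary matrices. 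One small correction to your closing remark: positive \emph{definiteness} is not what the final inequality needs, only nonnegativity $\lambda_i \ge 0$; this is worth noting because for singular $X$ the polar factor $P$ is merely positive semidefinite, and your argument still goes through verbatim in that case (the minimizing unitary is then just no longer unique), which makes your proof slightly more general than the statement as given.
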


We then apply Lemma~\ref{lem:polar} to establish the decreasing behavior of $g(U)$ at each algorithm iteration.
\begin{theorem}\label{thm:decreasing}
The sequence generated by Algorithm~\ref{alg:poldec} ensures a non-increase in the value of the function $g(U)$ at each iteration. 
\end{theorem}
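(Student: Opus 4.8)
The plan is to recast the monotonicity claim as a \emph{monotone increase} of the quantity the algorithm actually optimizes. Using the expansion of $g$ recorded in~\eqref{eq:objg}, minimizing $g$ over $\mathcal{S}_n$ is equivalent to maximizing the real scalar $\phi(U) := \mathfrak{Re}(\operatorname{tr}(\sigma U\rho U^*))$, since $\|\sigma\|_F^2$ and $\|\rho\|_F^2$ are constant. Hence it suffices to prove $\phi(U^{(s+1)}) \geq \phi(U^{(s)})$ at every step.

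First I would extract from Lemma~\ref{lem:polar} the variational characterization of the polar factor. Writing $W = 2\sigma U^{(s)}\rho$, the closest-unitary property $\|W - U^{(s+1)}\|_F \leq \|W - Z\|_F$ for all $Z\in\mathcal{S}_n$ expands, after the constant terms $\|W\|_F^2$ and $\|Z\|_F^2 = n$ cancel, into the statement that $U^{(s+1)}$ maximizes $Z \mapsto \mathfrak{Re}(\operatorname{tr}(W^* Z))$ over $\mathcal{S}_n$. A cyclic rearrangement of the trace then identifies this objective, up to the harmless factor $2$, with the \emph{bilinear} form $\Psi(U,V) := \mathfrak{Re}(\operatorname{tr}(\sigma U \rho V^*))$ evaluated at $V = U^{(s)}$. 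This yields the one-sided gain
\[
\Psi(U^{(s+1)}, U^{(s)}) \;\geq\; \Psi(U^{(s)}, U^{(s)}) \;=\; \phi(U^{(s)}).
\]

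The difficulty is that this inequality controls the \emph{off-diagonal} value $\Psi(U^{(s+1)},U^{(s)})$, whereas the objective is the \emph{diagonal} value $\phi(U^{(s+1)}) = \Psi(U^{(s+1)},U^{(s+1)})$; the polar step alone does not bridge this gap. To close it I would establish two algebraic facts about $\Psi$. The symmetry $\Psi(U,V) = \Psi(V,U)$ follows by conjugating inside the trace and using that $\sigma,\rho$ are Hermitian. More importantly, a polarization computation gives the identity
\[
\phi(U) + \phi(V) - 2\Psi(U,V) \;=\; \operatorname{tr}\!\big(\sigma (U-V)\rho (U-V)^*\big),
\]
and since $\sigma \succ 0$ and $\rho \succ 0$, the right-hand side is the trace of a product of a positive definite matrix with a positive semidefinite one, hence real and nonnegative. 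I expect this positivity step, namely recognizing the hidden majorization structure of the iteration and exploiting the definiteness hypotheses on $\sigma$ and $\rho$, to be the crux of the argument, as it is precisely what the naive polar-decomposition estimate misses.

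Finally I would combine the pieces. Taking $U = U^{(s+1)}$ and $V = U^{(s)}$ in the polarization inequality gives $\phi(U^{(s+1)}) + \phi(U^{(s)}) \geq 2\Psi(U^{(s+1)},U^{(s)})$, and chaining with the one-sided gain $\Psi(U^{(s+1)},U^{(s)}) \geq \phi(U^{(s)})$ yields $\phi(U^{(s+1)}) \geq \phi(U^{(s)})$, equivalently $g(U^{(s+1)}) \leq g(U^{(s)})$. This establishes the non-increase at each iteration.
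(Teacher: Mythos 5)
Your proposal is correct, and it rests on the same two pillars as the paper's proof: Lemma~\ref{lem:polar} applied to $2\sigma U^{(s)}\rho$ (giving, in your notation, $\Psi(U^{(s+1)},U^{(s)})\geq\phi(U^{(s)})$), and the nonnegativity of $\operatorname{tr}\bigl(\sigma\,\Delta U\,\rho\,(\Delta U)^*\bigr)$ for positive definite $\sigma,\rho$. The assembly, however, differs. The paper proves the pointwise bridge $\phi(U^{(s+1)})\geq\Psi(U^{(s+1)},U^{(s)})$ by splitting
\[
\phi(U^{(s+1)})-\Psi(U^{(s+1)},U^{(s)})
=\mathfrak{Re}\operatorname{tr}\bigl(\sigma\,\Delta U\,\rho\,(\Delta U)^*\bigr)
+\mathfrak{Re}\operatorname{tr}\bigl(\sigma\,U^{(s)}\rho\,(\Delta U)^*\bigr)
\]
and proving each term nonnegative; the second term gets its own argument through the Hermitian polar factor $P^{(s+1)}$, reducing to $\mathfrak{Re}\operatorname{tr}\bigl(P^{(s+1)}(I-(U^{(s)})^*U^{(s+1)})\bigr)\geq 0$. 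You avoid that second argument entirely: symmetry of $\Psi$ plus the polarization identity $\phi(U)+\phi(V)-2\Psi(U,V)=\operatorname{tr}\bigl(\sigma(U-V)\rho(U-V)^*\bigr)\geq 0$ lets you close the chain with the polar inequality alone, via $\phi(U^{(s+1)})\geq 2\Psi(U^{(s+1)},U^{(s)})-\phi(U^{(s)})\geq\phi(U^{(s)})$. The two arguments are in fact algebraically equivalent---since $\mathfrak{Re}\operatorname{tr}\bigl(\sigma U^{(s)}\rho(\Delta U)^*\bigr)=\Psi(U^{(s+1)},U^{(s)})-\phi(U^{(s)})$, the paper's decomposition rearranges exactly into your polarization identity---but your bookkeeping buys something concrete: it never invokes $P^{(s+1)}$, and so it sidesteps the delicate step in the paper where $I-(U^{(s)})^*U^{(s+1)}$ is asserted to be positive semidefinite even though that matrix is not Hermitian in general (the paper's conclusion survives only because $\mathfrak{Re}\operatorname{tr}(PW)\leq\operatorname{tr}(P)$ for unitary $W$ and positive semidefinite Hermitian $P$, i.e., only the Hermitian part of $I-(U^{(s)})^*U^{(s+1)}$ matters). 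The only thing the paper's route yields that yours does not is the stronger intermediate inequality $\phi(U^{(s+1)})\geq\Psi(U^{(s+1)},U^{(s)})$ itself, which is not needed for the theorem.
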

\begin{proof}
For $s\geq 1$, the matrix $U^{(s+1)}$, generated by Algorithm~\ref{alg:poldec} follows from the polar decomposition, which ensures that $U^{(s+1)}$ is the closest unitary matrix to the given matrix at each iteration. Hence, the update at each step
 %
    \begin{equation*}
    \|\sigma U^{(s)}\rho - U^{(s+1)}\|_F^2 \leq  \|\sigma U^{(s)}\rho - U^{(s)}\|_F^2,
    \end{equation*}
or equivalently, 
\begin{equation*}
-\rinner{\sigma, U^{(s+1)}\rho \left(U^{(s)}\right)^*}
\leq 
-\rinner{\sigma, U^{(s)}\rho \left(U^{(s)}\right)^*}.
\end{equation*}
Therefore, we have
    \begin{eqnarray*}
        g(U^{(s)}) &=& 
        \dfrac{1}{2} \left(\|\sigma\|_F^2+\|\rho\|_F^2-2\rinner{\sigma, U^{(s)}\rho \left(U^{(s)}\right)^*}\right)\\
        &\geq&\dfrac{1}{2} \left(\|\sigma\|_F^2+\|\rho\|_F^2-2\rinner{\sigma, U^{(s+1)}\rho \left(U^{(s)}\right)^*}\right).
    \end{eqnarray*}
Let $\Delta U = U^{(s+1)}-U^{(s)}$ denote the update step. 
Since both $\rho$ and $\sigma$ are positive definite matrices, we can assert that $\inner{\sigma, \Delta U\rho \left(\Delta U\right)^*} \geq 0$. This follows from the fact that the inner product of Hermitian positive semidefinite matrices is nonnegative. Moreover,  from Algorithm~\ref{alg:poldec}, we know that  \[U^{(s+1)}P^{(s+1)} = 2\sigma U^{(s)}\rho.\] Thus, the update step $\Delta U$ is well-defined and satisfies the relationship
    \begin{eqnarray*}
        &&\rinner{\sigma, U^{(s)}\rho \left(\Delta U\right)^*} \\&&= \mathfrak{Re}\left(Tr\left(\sigma U^{(s)}\rho \left(U^{(s+1)}\right)^*\right)\right) - \mathfrak{Re}\left(Tr\left(\sigma U^{(s)}\rho \left(U^{(s)}\right)^*\right)\right)\\
        &&=  \dfrac{1}{2}        \mathfrak{Re}\left(Tr\left(U^{(s+1)}P^{(s+1)} \left(U^{(s+1)}\right)^*\right)\right) -\dfrac{1}{2}
    \mathfrak{Re}\left(Tr\left(U^{(s+1)}P^{(s+1)} \left(U^{(s)}\right)^*\right)\right)\\
        &&=\dfrac{1}{2}
     \mathfrak{Re}\left(Tr\left(P^{(s+1)}\left(I_n-\left(U^{(s)}\right)^*
       U^{(s+1)}
       \right)  \right)\right)
       .
    \end{eqnarray*}
    Because the matrix $I-\left(U^{(s)}\right)^*U^{(s+1)}$ is positive semidefinite, and the trace of the product of two positive semidefinite matrices is nonnegative, we have
    \begin{equation*}
        \rinner{\sigma, U^{(s)}\rho \left(\Delta U\right)^*} \geq 0.
         \end{equation*}

Next, observe that 
    \begin{eqnarray*}
        &&\rinner{\sigma, U^{(s+1)}\rho \left(U^{(s+1)}\right)^*} -\rinner{\sigma, U^{(s+1)}\rho \left(U^{(s)}\right)^*} \\
        &=&  \rinner{\sigma, \Delta U\rho \left(\Delta U\right)^*}+\rinner{\sigma, U^{(s)}\rho \left(\Delta U\right)^*}.
    \end{eqnarray*}
Since both terms on the right-hand side are nonnegative, it follows that 
$\rinner{\sigma, U^{(s+1)}\rho \left(U^{(s+1)}\right)^*} \geq \rinner{\sigma, U^{(s+1)}\rho \left(U^{(s)}\right)^*}$ and subsequently, the function $g$ satisfies the decreasing property 
    \begin{eqnarray*}
        g(U^{(s)}) 
        &\geq&\dfrac{1}{2} \left(\|\sigma\|_F^2+\|\rho\|_F^2-2\rinner{\sigma, U^{(s+1)}\rho \left(U^{(s)}\right)^*}\right)\\
        &\geq&\dfrac{1}{2} \left(\|\sigma\|_F^2+\|\rho\|_F^2-2\rinner{\sigma, U^{(s+1)}\rho \left(U^{(s+1)}\right)^*}\right) = g(U^{(s+1)}),
    \end{eqnarray*}    
    i.e., the objective function does not increase between iterations, which completes the proof.
\end{proof}

Note that in Theorem~\ref{thm:decreasing}, we have shown that the objective value is decreasing along the sequence generated by Algorithm~\ref{alg:poldec} unless it attains its local minimum points. Our next focus is to investigate the convergence of the sequence $ U^{(s)}$. To achieve this, we need to establish the interrelationship governing the updates.
\begin{lemma}\label{lemm:sub}
    The sequence $\left\{U^{(s)}\right\}_{s=1}^\infty$ generated by algorithm \ref{alg:poldec} satisfies $\|U^{(s+1)}-U^{(s)}\|_F^2\rightarrow 0 $ as $s\rightarrow\infty$.
\end{lemma}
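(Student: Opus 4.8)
The plan is to combine the monotonicity of $g$ with its boundedness below to force the per-iteration decrease to vanish, and then to recover $\|U^{(s+1)}-U^{(s)}\|_F^2$ as a lower bound on that decrease via the positive definiteness of $\sigma$ and $\rho$. First I would record that $g(U^{(s)})\ge 0$ for every $s$ and that $\{g(U^{(s)})\}$ is non-increasing by Theorem~\ref{thm:decreasing}; a bounded monotone sequence converges, so the consecutive gaps satisfy $g(U^{(s)})-g(U^{(s+1)})\to 0$ as $s\to\infty$.

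Next I would reread the chain of inequalities established inside the proof of Theorem~\ref{thm:decreasing}. Writing $\Delta U = U^{(s+1)}-U^{(s)}$, that proof effectively exhibits $g(U^{(s)})-g(U^{(s+1)})$ as a sum of three nonnegative contributions: the quadratic term $\rinner{\sigma, \Delta U\rho (\Delta U)^*}$, the cross term $\rinner{\sigma, U^{(s)}\rho (\Delta U)^*}$, and the polar-decomposition term $\rinner{\sigma, U^{(s+1)}\rho (U^{(s)})^*}-\rinner{\sigma, U^{(s)}\rho (U^{(s)})^*}$. Discarding the last two nonnegative terms yields the one-sided estimate
\[
g(U^{(s)})-g(U^{(s+1)}) \;\ge\; \rinner{\sigma, \Delta U\rho (\Delta U)^*} \;\ge\; 0,
\]
so by the squeeze from the previous paragraph the quadratic term $\rinner{\sigma, \Delta U\rho (\Delta U)^*}$ tends to $0$.

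The crux is then a coercivity estimate converting this quadratic term into control of $\|\Delta U\|_F^2$. Since $\sigma$ is Hermitian, $\rinner{\sigma, \Delta U\rho (\Delta U)^*}$ equals the real number $\operatorname{tr}(\sigma\,\Delta U\,\rho\,(\Delta U)^*)$. Using $\rho \succeq \lambda_{\min}(\rho)\,I_n$ and $\sigma \succeq \lambda_{\min}(\sigma)\,I_n$ together with the fact that the trace of a product of two Hermitian positive semidefinite matrices is nonnegative, I would derive
\[
\operatorname{tr}(\sigma\,\Delta U\,\rho\,(\Delta U)^*) \;\ge\; \lambda_{\min}(\sigma)\,\lambda_{\min}(\rho)\,\|\Delta U\|_F^2 .
\]
Because $\sigma$ and $\rho$ are positive \emph{definite}, the constant $\lambda_{\min}(\sigma)\lambda_{\min}(\rho)$ is strictly positive; since the left-hand side tends to $0$, this forces $\|U^{(s+1)}-U^{(s)}\|_F^2\to 0$, which is the claim.

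The step I expect to be the main obstacle is precisely this coercivity bound, since it is the only place where positive definiteness (rather than mere positive semidefiniteness) of both factors is genuinely needed — a vanishing minimal eigenvalue would destroy the strict positivity of the constant. I would carry it out by peeling the two factors off in turn: first observe that $\Delta U\rho (\Delta U)^* - \lambda_{\min}(\rho)\,\Delta U(\Delta U)^*\succeq 0$ so that $\operatorname{tr}(\sigma\,\Delta U\,\rho\,(\Delta U)^*)\ge \lambda_{\min}(\rho)\operatorname{tr}(\sigma\,\Delta U(\Delta U)^*)$, then bound $\operatorname{tr}(\sigma\,\Delta U(\Delta U)^*)\ge \lambda_{\min}(\sigma)\operatorname{tr}(\Delta U(\Delta U)^*)=\lambda_{\min}(\sigma)\|\Delta U\|_F^2$, each inequality reducing to the nonnegativity of the trace of a product of positive semidefinite matrices.
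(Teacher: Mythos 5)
Your proposal is correct and follows essentially the same path as the paper's proof: monotonicity of $g$ plus boundedness below forces the per-iteration decrease to vanish, the decrease is bounded below by the quadratic term $\rinner{\sigma,\Delta U\rho(\Delta U)^*}$ after discarding the nonnegative cross and polar-decomposition terms, and positive definiteness supplies a coercivity constant. The only difference is cosmetic: where the paper proves coercivity by diagonalizing $\rho$ and $\sigma$ and bounding the resulting entry-wise sum, you peel off $\rho\succeq\lambda_{\min}(\rho)I_n$ and $\sigma\succeq\lambda_{\min}(\sigma)I_n$ in turn, which yields the same bound with the explicit constant $\lambda_{\min}(\sigma)\lambda_{\min}(\rho)$ and is, if anything, tidier.
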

\begin{proof}
Since $g$ is non-increasing along the sequence $\left\{ U^{(s)} \right\}$, 
for any given positive number $\epsilon$, 
we can select a positive integer $s_0(\epsilon)$ such that $s_0(\epsilon)$ is the smallest integer for which $g(U^{(s)}) - g(U^{(s+1)}) \leq \epsilon$.

Next, we aim to show that
$\|U^{(s)}-U^{(s+1)}\|\leq \epsilon$ for $s>s_0(\epsilon)$. Note that 
{\footnotesize
 \begin{eqnarray*}
        g(U^{(s)}) - g(U^{(s+1)}) &=& -\rinner{\sigma, U^{(s)}\rho \left(U^{(s)}\right)^*}+\rinner{\sigma, U^{(s+1)}\rho \left(U^{(s+1)}\right)^*}\\
        &=&  -\rinner{\sigma, U^{(s)}\rho \left(U^{(s)}\right)^*}+\rinner{\sigma, U^{(s+1)}\rho \left(U^{(s+1)}\right)^*}\\
        && +\rinner{\sigma, U^{(s)}\rho \left(U^{(s+1)}\right)^*}-\rinner{\sigma, U^{(s)}\rho \left(U^{(s+1)}\right)^*}\\
        &=& -\rinner{\sigma, \left(U^{(s)}+U^{(s+1)}\right)\rho \left(U^{(s)}-U^{(s+1)}\right)^*}\\
        &=& \rinner{\sigma, \left(\Delta U+2U^{(s)}\right)\rho \left(\Delta U\right)^*}\geq \rinner{\sigma, \left(\Delta U\right)\rho \left(\Delta U\right)^*},
\end{eqnarray*}
}
The third equality holds from following the equality: 
{
\begin{equation*}
\inner{A, B} = \operatorname{tr}(AB^*) = \overline{\operatorname{tr}(A^*B)} = \overline{\inner{A^*, B^*}},
\end{equation*}
}
i.e., 
\begin{eqnarray*}
\rinner{\sigma, U^{(s)}\rho \left(U^{(s+1)}\right)^*} = \rinner{\sigma, U^{(s+1)}\rho \left(U^{(s)}\right)^*}.
\end{eqnarray*}
since $\sigma$ and $\rho$ are Hermitian matrices. 

To proceed, we first consider the diagonalization of the Hermitian positive definite matrices $\rho$ and $\sigma$. 
Since $ \rho $ and $ \sigma $ are Hermitian, they can be diagonalized as follows
\begin{equation}\label{eq:diag}
\left \{
\begin{array}{l}
\rho = Q_\rho \Lambda_\rho Q_\rho^*, \\
\sigma = Q_\sigma \Lambda_\sigma Q_\sigma^*,
            \end{array}
        \right.
    \end{equation}
where 
$Q_\rho$ and $Q_\sigma$ are unitary matrices whose columns are the eigenvectors of the Hermitian matrices $\rho$ and $\sigma$, respectively, while $ \Lambda_\rho$ and $\Lambda_\sigma$ are diagonal matrices containing their corresponding non-negative eigenvalues.

By applying the decomposition in~\eqref{eq:diag}, we obtain the following expression
\begin{eqnarray*}\label{eq:eval}
        \rinner{\sigma, \left(\Delta U\right)\rho \left(\Delta U\right)^*}
&=&\mathfrak{Re}(Tr\left( Q_\rho \Lambda_\rho Q_\rho^* (\Delta U) Q_\sigma \Lambda_\sigma Q_\sigma^* (\Delta U)^*\right)) \nonumber\\
&=& \mathfrak{Re}\left(Tr\left( (\Lambda_\sigma Q_\sigma^* (\Delta U)^* Q_\rho )(\Lambda_\rho Q_\rho^* (\Delta U)^* Q_\sigma )\right)\right).
    \end{eqnarray*}

Let $R = Q_\sigma^* (\Delta U)^* Q_\rho$. Then, the equation above implies  
    \begin{eqnarray*}
        \mathfrak{Re}\left(Tr\left( (\Lambda_\sigma R)(\Lambda_\rho R^*)\right)\right) &=& \mathfrak{Re}\left(\sum_{i}\sum_{j} (\Lambda_\sigma R)_{ij} (\Lambda_\rho R^*)_{ji}\right) \\
        &=& \mathfrak{Re}\left(\sum_{i}\sum_{j} ((\Lambda_\sigma)_{ii} R_{ij} ((\Lambda_\rho)_{jj} R^*)_{ji}\right)\\
        &\geq& \min_j\left\{(\Lambda_\sigma)_{jj}^2,(\Lambda_\rho)_{jj}^2 \right) \sum_{i, j} R_{ij}R^*_{ji} \geq c \|R\|_F^2, 
    \end{eqnarray*}
  for some $c>0$.
{
  The third inequality follows from the fact that both $ \rho $ and $ \sigma $ have eigenvalues that are positive and strictly less than one, and that for any two positive numbers, their product is greater than or equal to the smaller of their squares.
  }
Since $\|R\|_F^2 = \|\Delta U\|_F^2$, this completes the proof.
\end{proof}

So far, we have not established the convergence of the sequence. However, since the generated sequence $\{U^{(s)}\}$ is bounded, we can conclude that there exists a convergent subsequence, which satisfies the following property.

\begin{lemma}\label{lemma:subseq}
        
    {\color{black}If $\nabla g(U)$ is nonsingular, then any convergent subsequence generated by Algorithm~\ref{alg:poldec} has a limit point $\hat{U}$} that satisfies the condition
    \begin{eqnarray*}
        \hat{U} = \mbox{poldec}\left(-\nabla g(\hat{U})\right),
    \end{eqnarray*}
    where $\hat{U}$ 
     represents the unitary component obtained from the polar decomposition of $-\nabla g(\hat{U})$. 
\end{lemma}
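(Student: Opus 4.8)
The plan is to combine three ingredients: the near-stationarity of consecutive iterates supplied by Lemma~\ref{lemm:sub}, the continuity of the Euclidean anti-gradient map $U \mapsto -\nabla g(U) = 2\sigma U\rho$, and the continuity of the unitary factor of the polar decomposition at nonsingular arguments. Let $\{U^{(s_k)}\}$ be a convergent subsequence with $U^{(s_k)} \to \hat{U}$ (such a subsequence exists because every iterate lies in the compact set $\mathcal{S}_n$, as noted just before the statement). The first step is to transfer this convergence to the one-step shift: by Lemma~\ref{lemm:sub} we have $\|U^{(s+1)} - U^{(s)}\|_F \to 0$, so in particular $\|U^{(s_k+1)} - U^{(s_k)}\|_F \to 0$, and together with $U^{(s_k)} \to \hat{U}$ this forces $U^{(s_k+1)} \to \hat{U}$ as well.

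Next I would pass to the limit inside the update rule. By construction in Algorithm~\ref{alg:poldec}, $U^{(s_k+1)}$ is the unitary factor of the polar decomposition of $-\nabla g(U^{(s_k)}) = 2\sigma U^{(s_k)}\rho$. Since the map $U \mapsto 2\sigma U\rho$ is real-linear and hence continuous, we obtain $-\nabla g(U^{(s_k)}) \to -\nabla g(\hat{U}) = 2\sigma\hat{U}\rho$. The decisive step is then to invoke continuity of the polar-decomposition factor at $-\nabla g(\hat{U})$, which gives $\mbox{poldec}(-\nabla g(U^{(s_k)})) \to \mbox{poldec}(-\nabla g(\hat{U}))$. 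Combining the two limits $U^{(s_k+1)} \to \hat{U}$ and $U^{(s_k+1)} = \mbox{poldec}(-\nabla g(U^{(s_k)})) \to \mbox{poldec}(-\nabla g(\hat{U}))$ with the uniqueness of limits in $\mathbb{C}^{n\times n}$ yields $\hat{U} = \mbox{poldec}(-\nabla g(\hat{U}))$, which is exactly the claimed fixed-point relation.

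The main obstacle is the continuity of the polar-decomposition factor, since this is the one place the argument can break down: the unitary factor of $X = UP$ is uniquely determined, and depends continuously on $X$, only on the open set of nonsingular matrices, which is precisely why the nonsingularity hypothesis on $\nabla g$ is imposed. I would justify this step by recalling that for nonsingular $X$ the unitary factor has the closed form $U = X(X^*X)^{-1/2}$, a composition of continuous operations (matrix multiplication, inversion, and the Hermitian positive-definite square root, the last being continuous on the cone of positive-definite matrices). I would also remark that the hypothesis is automatically met here: because $\sigma$ and $\rho$ are positive definite and $\hat{U}$ is unitary, the product $-\nabla g(\hat{U}) = 2\sigma\hat{U}\rho$ is nonsingular, so continuity of the polar factor is guaranteed at the limit point and the argument goes through.
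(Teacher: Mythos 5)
Your proof is correct, and it is worth noting that the paper never actually proves this lemma: it is stated bare, and the only trace of the intended argument is a dangling clause in the proof of the final convergence theorem (``Since the polar decomposition is unique for full-rank matrices and continuous concerning its parameters''), which is exactly the continuity argument you spell out. Your route --- pass to the shifted subsequence $U^{(s_k+1)} \to \hat{U}$ via Lemma~\ref{lemm:sub}, use continuity of the real-linear map $U \mapsto 2\sigma U \rho$, then invoke continuity of the unitary polar factor $X \mapsto X\left(X^*X\right)^{-1/2}$ on the open set of nonsingular matrices, and conclude by uniqueness of limits --- is the natural one and is complete; each step is justified. Your closing remark that the nonsingularity hypothesis is automatic here (since $\sigma$ and $\rho$ are Hermitian positive definite and $\hat{U}$ is unitary, $-\nabla g(\hat{U}) = 2\sigma\hat{U}\rho$ is invertible, and indeed every iterate $2\sigma U^{(s)}\rho$ is as well, so the algorithm's polar decompositions are all well defined and unique) is a useful observation that the paper leaves implicit, and it shows the stated hypothesis could be dropped rather than assumed.
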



To establish the convergence of the sequence ${U^{(s)}}$, we begin by introducing three essential lemmas. These lemmas will help us analyze and demonstrate the convergence behavior of the sequence generated by Algorithm~\ref{alg:poldec}. The first lemma addresses the general properties of iterative sequences and provides insight into the potential convergence behavior when continuous mappings produce such sequences. This important finding will help us understand how the sequence changes over time through iterations.
\begin{lemma}\label{lemma:Guan18}\cite[Theorem 1]{Guan18}
Let $\mathcal{F}: M \rightarrow M$ be a continuous map over a compact subset $M$ of a finite-dimensional Euclidean space. Given an initial point $U^{(0)}\in U$, consider the sequence $\{U^{(s)}\}$ generated by the iterative scheme
\[
U^{(s+1)} = \mathcal{F}(U^{(s)}), \quad s = 0, 1, 2, \ldots
\]
which is assumed to be well-defined. 
If the sequence $\{U^{(s)}\}$ is bounded and has only finitely many accumulation points, then the following holds:
\begin{enumerate}
    \item The sequence $\{U^{(s)}\}$ converges, or
    \item For sufficiently large $s$, the consecutive elements $U^{(s)}, U^{(s+1)}, \dots$ exhibit cyclic behavior. Specifically, disjointed neighborhoods exist around the accumulation points, so the sequence cyclically visits each neighborhood.
\end{enumerate}
\end{lemma}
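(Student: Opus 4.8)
The plan is to reconstruct this cited dichotomy by studying the structure of the orbit's set of accumulation points (its $\omega$-limit set) and by tracking the sequence's \emph{itinerary} among small neighborhoods of those points. Write $\Omega=\{p_1,\dots,p_k\}$ for the accumulation set, which is nonempty because $M$ is compact and finite by hypothesis. The first observation I would record is that $\mathcal{F}(\Omega)\subseteq\Omega$: if $U^{(s_j)}\to p_i$ along a subsequence, then continuity of $\mathcal{F}$ forces $U^{(s_j+1)}=\mathcal{F}(U^{(s_j)})\to\mathcal{F}(p_i)$, so $\mathcal{F}(p_i)$ is again an accumulation point. This lets me define a \emph{successor map} $\tau:\{1,\dots,k\}\to\{1,\dots,k\}$ by $\mathcal{F}(p_i)=p_{\tau(i)}$, reducing the asymptotic dynamics to a map of a finite set into itself.

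Next I would establish eventual confinement together with a deterministic transition rule. Let $2r=\min_{i\neq j}\|p_i-p_j\|>0$ (when $k=1$ this step is skipped) and use uniform continuity of $\mathcal{F}$ on the compact set $M$ to pick $\delta>0$ with $\|x-y\|<\delta\Rightarrow\|\mathcal{F}(x)-\mathcal{F}(y)\|<r$; then set $\epsilon_0<\min(\delta,r)$, so the open balls $B(p_i,\epsilon_0)$ are pairwise disjoint. For confinement, note that $M\setminus\bigcup_i B(p_i,\epsilon_0)$ is compact and contains no accumulation point of the orbit, hence only finitely many terms, so there is $S$ with $U^{(s)}\in\bigcup_i B(p_i,\epsilon_0)$ for all $s\ge S$; let $i(s)$ be the unique index of the ball containing $U^{(s)}$. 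The transition rule is $i(s+1)=\tau(i(s))$: since $\|U^{(s)}-p_{i(s)}\|<\delta$ we get $\|U^{(s+1)}-p_{\tau(i(s))}\|<r$, while $\|U^{(s+1)}-p_{i(s+1)}\|<\epsilon_0<r$; were the two target indices different, the triangle inequality would give $2r\le\|p_{i(s+1)}-p_{\tau(i(s))}\|<r+r=2r$, a contradiction.

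With the itinerary pinned to $\tau$, the conclusion follows from a short combinatorial argument. Iterating a self-map of a finite set, the sequence $i(S),\tau(i(S)),\tau^2(i(S)),\dots$ is eventually periodic, entering a cycle $C\subseteq\{1,\dots,k\}$; because every $p_i\in\Omega$ must be visited infinitely often, no index can lie only in the finite transient, forcing $C=\{1,\dots,k\}$. If $|C|=1$ then $k=1$, and rerunning the confinement step for every $\epsilon$ yields $U^{(s)}\to p_1$, which is case~(1); if $|C|\ge 2$ the orbit visits the disjoint neighborhoods $B(p_1,\epsilon_0),\dots,B(p_k,\epsilon_0)$ cyclically, which is case~(2). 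The step I expect to be the main obstacle is the transition rule: one must resist forcing $\mathcal{F}(B(p_i,\epsilon_0))\subseteq B(p_{\tau(i)},\epsilon_0)$, which can fail when $\mathcal{F}$ is locally expanding and leads to a circular choice of radii, and instead only demand that the image land within distance $r$ of the correct limit, letting the separation $2r$ of the accumulation points identify the destination neighborhood unambiguously.
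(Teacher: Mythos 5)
Your proof is correct, and it fills in an argument the paper itself never gives: this lemma is quoted without proof from \cite[Theorem 1]{Guan18}, so there is no in-paper proof to compare against. Your reconstruction — invariance of the finite accumulation set under $\mathcal{F}$, eventual confinement of the tail to disjoint balls $B(p_i,\epsilon_0)$, uniform continuity plus the $2r$-separation forcing the deterministic transition rule $i(s+1)=\tau(i(s))$, and the finite-dynamics dichotomy (fixed point versus cycle) — is essentially the same argument as in the cited reference, and your care in only requiring the image to land within distance $r$ of the correct limit (rather than demanding $\mathcal{F}(B(p_i,\epsilon_0))\subseteq B(p_{\tau(i)},\epsilon_0)$) correctly avoids the circular choice of radii that would otherwise arise.
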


%
Additionally, the second lemma, as stated in~\cite[Theorem 7.1.1]{Sommese05}, addresses the number of solutions to polynomial systems. This result characterizes the behavior of the critical points of an objective function, where the solutions of a polynomial system define the first-order optimality conditions.
\begin{lemma}\label{lemma:count}
 Let $P(z; q): \mathbb{C}^n \times \mathbb{C}^m \to \mathbb{C}^n$ be a system of polynomials in $n$ variables and $m$ parameters; that is,
 $F(z; q) = \{f_1(z; q), \ldots, f_n(z; q)\}$ and each $f_i(z; q)$ is polynomial in both $z$ and $q$ . 
Furthermore, let $\mathcal{N}(q)$ denote the number of nonsingular solutions as a function of $q$:
\[
 \mathcal{N}(q) := \# \left\{ z \in \mathbb{C}^n \mid F(z; q) = 0, \det \left( \frac{\partial F}{\partial z}(z; q) \right) \neq 0 \right\}.
 \]
Then, 
 \begin{enumerate}
\item $\mathcal{N}(q)$ is finite and is the same, denoted as $\mathcal{N}$, for almost all $q \in \mathbb{C}^m$;
\item For all $q \in \mathbb{C}^m$, it holds that $\mathcal{N}(q) \leq \mathcal{N}$;


 \end{enumerate} 
\end{lemma}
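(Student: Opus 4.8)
The plan is to study the solution behavior through the \emph{incidence variety} and to exploit the implicit function theorem to obtain local constancy of the nonsingular solution count, then upgrade this to global constancy via connectedness of a generic parameter set. First I would introduce the total solution set
\[
\mathcal{V} = \{(z, q) \in \mathbb{C}^n \times \mathbb{C}^m : F(z; q) = 0\},
\]
and restrict attention to the subset $\mathcal{V}^*$ on which the Jacobian $\partial F/\partial z$ is nonsingular. At any point $(z_0, q_0) \in \mathcal{V}^*$ the holomorphic implicit function theorem yields a unique local holomorphic branch $z = z(q)$ with $z(q_0) = z_0$; hence the projection $\pi: \mathcal{V}^* \to \mathbb{C}^m$, $(z, q) \mapsto q$, is a local biholomorphism and $\mathcal{V}^*$ is a complex manifold of dimension $m$. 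The finiteness claimed in item 1 is then immediate: each nonsingular solution is an isolated point of $V(F(\cdot;q))$, a variety has only finitely many isolated points, and Bézout's bound $\prod_i \deg_z f_i$ caps their number, so $\mathcal{N}(q) < \infty$ for every $q$.

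Next I would isolate the \emph{exceptional locus} $\Sigma \subset \mathbb{C}^m$ consisting of those parameters $q$ for which $F(\cdot; q) = 0$ either admits a singular solution or loses solutions to infinity. Projecting the singular set $\{(z,q) : F = 0,\ \det(\partial F/\partial z) = 0\}$ together with the hyperplane at infinity of a projective closure, and applying elimination theory (resultants), one shows $\Sigma$ is contained in a proper algebraic subvariety of $\mathbb{C}^m$. Its complement is therefore Zariski open, dense, and, as the complement of a proper subvariety in an irreducible space, path-connected in the classical topology. Over $\mathbb{C}^m \setminus \Sigma$ the map $\pi$ is a proper local biholomorphism, hence a genuine covering with fibers of constant cardinality; connectedness of the base forces this to be a single integer $\mathcal{N}$, establishing item 1.

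For item 2 I would argue by upper semicontinuity of the nonsingular count. Fix an arbitrary $q_0 \in \mathbb{C}^m$ and let $z_1, \dots, z_k$ be its distinct nonsingular solutions, so $k = \mathcal{N}(q_0)$. Each $z_j$ admits a local holomorphic continuation $z_j(q)$ near $q_0$ by the implicit function theorem, and for $q$ sufficiently close these continuations remain distinct and nonsingular. Selecting such a $q \notin \Sigma$ produces $k$ distinct nonsingular solutions at a generic parameter, whence $\mathcal{N}(q_0) = k \le \mathcal{N}$.

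The main obstacle I anticipate is the control of the exceptional locus $\Sigma$. Proving it to be a genuine proper subvariety requires ruling out the pathological possibility that solutions escape to infinity over a large set of parameters, which is precisely where a projective or toric compactification together with a properness argument becomes essential; without it one cannot conclude that $\pi$ is a covering over the generic stratum. The companion delicate point is the path-connectedness of $\mathbb{C}^m \setminus \Sigma$, needed to pass from local to global constancy of the count, and this relies on the \emph{algebraicity} of $\Sigma$ rather than on any softer topological input.
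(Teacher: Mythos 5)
First, a point of reference: the paper does not prove this lemma at all --- it is quoted verbatim, with citation, as Theorem 7.1.1 of Sommese--Wampler (the Morgan--Sommese parameter continuation theorem). So your attempt has to be measured against that standard argument. Your architecture matches it in spirit: the incidence set $\mathcal{V}^* = \{(z,q) : F(z;q)=0,\ \det(\partial F/\partial z)(z;q)\neq 0\}$, the observation via the holomorphic implicit function theorem that the projection $\pi$ to parameter space is a local biholomorphism, constancy of the fiber count over a connected Zariski-open set of parameters for item 1, and holomorphic continuation plus genericity for item 2. Your finiteness argument (isolatedness plus B\'ezout) and your item 2 argument are correct as written, conditional on item 1.

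The genuine gap is the load-bearing step of item 1: your claim that the exceptional locus $\Sigma$, defined as the set of $q$ admitting \emph{some} singular solution or a solution at infinity, is contained in a proper algebraic subvariety of $\mathbb{C}^m$. This is false in general. Take $n=m=1$ and $f(z;q)=z^2(z-q)$: every $q$ admits the singular solution $z=0$, so your $\Sigma$ is all of $\mathbb{C}$, the complement is empty, and your covering argument proves nothing --- yet the lemma still asserts something true and nontrivial here, namely $\mathcal{N}(q)=1$ for $q\neq 0$ and $\mathcal{N}(0)=0\leq 1$. Resultants cannot rescue the step: whenever the singular locus $\{F=0,\ \det(\partial F/\partial z)=0\}$ dominates parameter space, every eliminant vanishes identically and certifies nothing. (You anticipated trouble only from solutions escaping to infinity; the more basic failure mode is that singular solutions, e.g.\ positive-dimensional solution components, can exist for \emph{every} parameter value.) The repair, which is what the cited proof in effect does, is to never discard a parameter merely because a singular solution lies over it. Work instead with the Zariski closure $X=\overline{\mathcal{V}^*}$: each irreducible component $X_i$ of $X$ meets $\mathcal{V}^*$ in a dense open set, hence has dimension exactly $m$ and dominates $\mathbb{C}^m$ (because $\pi$ is an open map on $\mathcal{V}^*$), so $\pi|_{X_i}$ is generically finite with constant fiber cardinality $d_i$ off a proper subvariety. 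The correct bad set is the union of the branch loci of these finitely many maps, the images of the pairwise intersections $X_i\cap X_j$, and the image of the boundary $X\setminus\mathcal{V}^* = X\cap\{\det = 0\}$; each piece has dimension $<m$ (density of $\mathcal{V}^*$ in each component) and, by Chevalley's theorem on constructible images, lies in a proper closed subvariety. Over the complement, every point of every fiber of every $X_i$ is a nonsingular solution, distinct components contribute distinct points, and $\mathcal{N}(q)=\sum_i d_i$ is constant; your connectedness and continuation arguments then finish items 1 and 2. Note that this algebraic route also dissolves your properness worry: no compactification or covering-space properness is needed once generic finiteness of dominant morphisms is invoked.
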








Finally, the third lemma in real analysis provides a valuable property for analyzing the convergence of the sequence.
\begin{lemma}\label{lemma:conv}\cite[Lemma 4.4]{Chu93}
 Let $\{u^{(s)}\}$ be a bounded sequence of real numbers such that $|u^{(s+1)} - u^{(s)}| \to 0$ as $s \to \infty$. If the sequence has only finitely many limit points, then $\{u^{(s)}\}$ converges to a single, unique limit point.
\end{lemma}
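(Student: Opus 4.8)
The plan is to argue by contradiction. I would assume that the bounded sequence $\{u^{(s)}\}$ possesses at least two distinct limit points and derive a contradiction with the hypothesis that the set of limit points is finite. The guiding intuition is that the condition $|u^{(s+1)}-u^{(s)}|\to 0$ prevents the sequence from ``jumping'' across a gap separating two limit points without leaving a term inside that gap; iterating this observation manufactures a fresh limit point strictly between the given ones, which is impossible once the limit points are finite in number and hence isolated.

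Concretely, I would list the finitely many limit points as $L_1 < L_2 < \cdots < L_k$ and suppose, toward a contradiction, that $k \geq 2$. Choosing any value $c$ strictly between two consecutive limit points, say $L_1 < c < L_2$, I would set $\delta = \tfrac{1}{2}\min_i |c - L_i| > 0$, so that the closed interval $[c-\delta,\, c+\delta]$ contains no limit point whatsoever. Because $L_1$ and $L_2$ are limit points, infinitely many terms of the sequence fall below $c-\delta$ and infinitely many fall above $c+\delta$; in particular, the sequence must switch between these two regions infinitely often, since otherwise one of them would be entered only finitely many times.

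Next, I would invoke $|u^{(s+1)}-u^{(s)}| \to 0$ to select an index $N$ beyond which every consecutive step has magnitude strictly less than $2\delta$, the width of the forbidden band. For indices past $N$, the sequence cannot pass from a value below $c-\delta$ to a value above $c+\delta$ in a single step, so each of the infinitely many transitions deposits at least one term inside $[c-\delta,\, c+\delta]$. This yields infinitely many terms in a compact interval, and by the Bolzano--Weierstrass theorem they admit a subsequence converging to some $L^* \in [c-\delta,\, c+\delta]$. Then $L^*$ is a limit point of $\{u^{(s)}\}$ lying in an interval containing no limit point by construction, the desired contradiction. Hence $k = 1$, and a bounded sequence with a single limit point necessarily converges to it.

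The step I expect to be the main obstacle is making the discrete ``crossing'' argument fully rigorous: I must confirm that the sequence genuinely alternates between the two regions infinitely often, and that the small-step property suffices to force a term into the narrow band at every transition past the threshold index $N$. Coupling the ``infinitely often'' behavior of both regions with the step-size bound in a single clean estimate is the delicate part; the remainder reduces to a routine application of Bolzano--Weierstrass together with the elementary fact that a bounded sequence with a unique limit point converges to that point.
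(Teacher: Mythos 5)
Your proof is correct, and all three of the ``delicate'' points you flag do go through: (i) since both regions $\{u < c-\delta\}$ and $\{u > c+\delta\}$ contain infinitely many terms, you can interleave indices $s_1 < t_1 < s_2 < t_2 < \cdots$ landing alternately in each, so crossings occur in infinitely many disjoint index intervals; (ii) within each crossing, taking the first index $t$ past the low region with $u^{(t)} \geq c-\delta$ and using the step bound $|u^{(t)}-u^{(t-1)}| < 2\delta$ forces $u^{(t)} \in [c-\delta, c+\delta)$, so each crossing past $N$ deposits a term in the band; (iii) Bolzano--Weierstrass then produces a limit point inside the closed band, which by construction contains none of the $L_i$, the desired contradiction.

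For comparison: the paper does not prove this lemma at all --- it is imported by citation from Chu and Watterson \cite[Lemma 4.4]{Chu93} and used as a black box in the convergence theorem. Your argument is the standard self-contained one (gap between consecutive limit points, small steps cannot jump the gap, so the gap traps infinitely many terms), which is essentially the classical proof underlying the cited result. What your write-up buys is that the paper's convergence analysis becomes self-contained at this point rather than resting on an external reference; nothing in your approach conflicts with how the lemma is invoked later, since you prove exactly the stated statement, including the final reduction that a bounded sequence with a unique limit point converges to it.
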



We now present the convergence result of the proposed algorithm.
\begin{theorem}
    The sequence generated by Algorithm~\ref{alg:poldec} with any unitary matrix $U^{(0)}\in \mathbb{C}^{n\times n}$ has a unique limit point, which is also a critical point of { \color{black}Problem~\ref{eq:prob1}} for almost all pairs of Hermitian positive definite matrices $(\sigma, \rho)$. 
\end{theorem}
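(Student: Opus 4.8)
The plan is to realize one step of Algorithm~\ref{alg:poldec} as a single continuous self-map of a compact manifold whose fixed points are exactly the critical points, and then to force convergence by showing that, for generic data, there can be only finitely many such fixed points. First I would observe that because $\sigma$ and $\rho$ are positive definite, the matrix $2\sigma U \rho$ is nonsingular for every $U \in \mathcal{S}_n$; hence its polar factor is unique, the map $\mathcal{F}(U) := \mathrm{poldec}(2\sigma U \rho)$ is a well-defined continuous map $\mathcal{F}:\mathcal{S}_n \to \mathcal{S}_n$ with $U^{(s+1)} = \mathcal{F}(U^{(s)})$, and the hypothesis of Lemma~\ref{lemma:subseq} (nonsingularity of $\nabla g(U) = -2\sigma U\rho$) is automatically met. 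Since $\mathcal{S}_n$ is compact the sequence is bounded, and combining the continuity of $\mathcal{F}$ with the step estimate $\|U^{(s+1)} - U^{(s)}\|_F \to 0$ of Lemma~\ref{lemm:sub} shows, as in Lemma~\ref{lemma:subseq}, that every accumulation point $\hat{U}$ satisfies $\hat{U} = \mathcal{F}(\hat{U})$. By Theorem~\ref{lemma:polarcri} these fixed points are precisely the critical points of $g$ on $\mathcal{S}_n$.

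The next step is to bound the number of accumulation points. The fixed-point relation $\hat{U} = \mathrm{poldec}(2\sigma \hat{U}\rho)$ is equivalent to requiring $\hat{U}^*(2\sigma \hat{U}\rho)$ to be Hermitian, i.e. $\hat{U}^*\sigma \hat{U}\rho = \rho\, \hat{U}^*\sigma \hat{U}$, together with the unitarity constraint $\hat{U}^*\hat{U} = I_n$ (positive definiteness of the Hermitian factor is then automatic, since a Hermitian product of two positive definite matrices is positive definite). Treating the real and imaginary parts of the entries of $U$ as the unknowns $z$ and the entries of $(\sigma,\rho)$ as the parameters $q$, these two matrix identities form a polynomial system $F(z;q)=0$ of the type covered by Lemma~\ref{lemma:count}. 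Applying that lemma, for almost all pairs $(\sigma,\rho)$ the system has only finitely many solutions, so $g$ has only finitely many critical points, and hence the orbit $\{U^{(s)}\}$ has only finitely many accumulation points.

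With finiteness in hand I would invoke Lemma~\ref{lemma:Guan18}: being a bounded orbit of the continuous map $\mathcal{F}$ on the compact set $\mathcal{S}_n$ with finitely many accumulation points, $\{U^{(s)}\}$ either converges or eventually cycles among disjoint neighborhoods of distinct accumulation points. The cyclic alternative is excluded by Lemma~\ref{lemm:sub}: two distinct accumulation points are separated by a fixed positive distance, which consecutive iterates cannot bridge once $\|U^{(s+1)}-U^{(s)}\|_F \to 0$ (equivalently, one applies Lemma~\ref{lemma:conv} entrywise to the real and imaginary parts of $U^{(s)}$, each being a bounded real sequence with vanishing consecutive differences and finitely many limit points). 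Therefore the whole sequence converges to a single limit $\hat{U}$, which, being an accumulation point, is a fixed point of $\mathcal{F}$ and hence a critical point of Problem~\ref{eq:prob1} by Theorem~\ref{lemma:polarcri}; the monotone decrease established in Theorem~\ref{thm:decreasing} is the engine behind the step estimate used throughout.

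I expect the main obstacle to be the second step: the rigorous reduction of the critical-point condition to a polynomial system to which Lemma~\ref{lemma:count} genuinely applies. One must cast the commuting and unitarity conditions as honest polynomials, most cleanly by treating $U$ and $\overline{U}$ as independent complex variables over the complexification and recovering the physical solutions as those with $\overline{U}$ conjugate to $U$, matching the number of equations to the number of unknowns so that a zero-dimensional solution set is the generic expectation on the $n^2$-dimensional manifold $\mathcal{S}_n$. The delicate point is reconciling the lemma's count of Jacobian-nonsingular solutions with the possibility of singular or positive-dimensional solution components for exceptional parameters, and confining those degeneracies to a measure-zero set of $(\sigma,\rho)$, so that "finitely many critical points for almost all data" holds. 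Once this genericity is secured, the remaining topological and analytic steps are routine applications of the cited lemmas.
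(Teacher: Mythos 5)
Your proposal is correct and follows essentially the same route as the paper's own proof: accumulation points exist by compactness and are fixed points of the polar-decomposition map (Lemma~\ref{lemma:subseq}), hence critical points (Theorem~\ref{lemma:polarcri}); genericity of finitely many critical points comes from Lemma~\ref{lemma:count}; and convergence follows from Lemma~\ref{lemma:Guan18} with the cyclic alternative ruled out via Lemma~\ref{lemm:sub} and Lemma~\ref{lemma:conv}. If anything, your write-up is more careful than the paper's --- in particular you make explicit the nonsingularity of $2\sigma U\rho$, the polynomial form of the fixed-point equations, and the delicacy of applying Lemma~\ref{lemma:count} to them, all of which the paper's proof leaves implicit.
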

\begin{proof}
Note that the set $\mathcal{S}_n$ is compact and bounded, meaning any sequence $U^{(s)} \in \mathcal{S}_n$ must have a convergent subsequence. Since the polar decomposition is unique for full-rank matrices and continuous concerning its parameters, 

According to Lemma~\ref{lemma:subseq}, all accumulation points satisfy the equation
\[
\hat{U}^{(s)} \hat{P}^{(s+1)} = -\nabla g(\hat{U}^{(s)})
\]
for some Hermitian positive definite matrix $\hat{P}^{s+1}$. Additionally, by Lemma~\ref{lemma:polarcri}, $\hat{U}^{(s)}$ are critical points of \textcolor{black}{Problem~\ref{eq:prob1}} and solutions to the first-order condition of the objective function $g$ subject to the unitary constraint. {\color{black}This condition is a polynomial equation.} From Lemma~\ref{lemma:count}, it follows that the number of critical points is finite for almost all pairs of $(\sigma, \rho)$.

Therefore, based on  Lemma~\ref{lemma:Guan18}, we conclude that the sequence $\{U^{(s)}\}$ either converges or forms a cyclic orbit. Finally, Lemma~\ref{lemma:conv} ensures convergence; otherwise, the distance $\|U^{(s)} - U^{(s+1)}\|_F^2$ would not converge to zero. This completes the proof.
%
%
\end{proof}


%
%

\section{Reconstruction Process and Complexity Analysis}
First, we introduce the naive quantum state tomography process to a quantum state $\rho\in \mathbb{C}^{n\times n}$. We write $\rho = \sum_{i,j= 1}^n \rho_{ij} e_i e_j^T$, $\rho_{ij} \in \mathbb{C}$, where $e_i$ is a standard vector in $\mathbb{C}^n$ taking value 1 at the $i$-th position.
Define two Hermitian matrices, $\left(E_{ij}\right)_+ = (e_i e_j^T+e_j e_i^T)/2$, and $\left(E_{ij}\right)_- = (e_i e_j^T-e_j e_i^T)/{2i}$. The standard quantum state tomography process utilizes $\left\{\left(E_{ij}\right)_+, \left(E_{ij}\right)_-\right\}_{i,j = 1}^n$to identify the  $\rho_{ij}$ by computing $\mbox{Tr}(\rho \left(E_{ij}\right)_+)$ and $\mbox{Tr}(\rho \left(E_{ij}\right)_-)$ to obtain the real and imaginary part of $\rho_{ij}$, respectively. Thus, a naive quantum state tomography requires $n^2+n$ operators to determine the description of $\rho$ uniquely.

Next, we analyze the total operations of applying our proposed method to reconstruct the quantum channel prescribed on unitary matrices. Leveraging the proposed algorithm alongside {Theorem \ref{thm:one-theorem} and Theorem \ref{thm:global-theorem}}, we outline the following procedure to reconstruct the quantum channel 
$\Phi(\rho) = U \rho U^*$ from given quantum states $\rho$.

\paragraph{Step 1}
Choose a non-degenerate state $\rho_0$ and order its eigenvectors in a fixed order to form the matrix $V = [v_1, v_2, \cdots v_n]$. Then, pass this $\rho_0$ to a channel $\Phi$ in interest. We must apply quantum state tomography to $\Phi(\rho_0)$ to prepare information for the proposed algorithm. Finally, we obtain a unitary estimate $U_0$ given from our method.

\paragraph{Step 2}
{
From the theoretical result of {Theorem} \ref{thm:one-theorem}, there exists a diagonal matrix $D$ whose diagonal entries have unit modulus, such that $U_0 = U V D V^*$. Consequently, we obtain  
\[
\Phi(\rho) = U_0 \bigl(V D V^*\bigr) \rho \bigl(V D^* V^*\bigr) U_0^*.
\]
}

\paragraph{Step 3}
Consider two specific quantum states:  
\[
\left(\rho_{p,q,r}\right)_+ = v_r v_r^* + \dfrac{1}{2}(v_p v_q^* + v_q v_p^*),
\]
and  
\[
\left(\rho_{p,q,r}\right)_- = v_r v_r  ^* + \frac{1}{2i} (v_p v_q^* - v_q v_p^*),
\]
where $p \neq q$ and $p, q \neq i$.
Without loss of generality, we consider the case where $r = 1$. By expressing $\Phi\left(\left(\rho_{p,q,r}\right)_+\right)$ and $ \Phi\left(\left(\rho_{p,q,r}\right)_-\right)$ in the following form:  
\begin{align*}
\begin{array}{l}
     U_0\left(\sum_{i=1}^n d_{ii} v_i v_i^* \left(v_1 v_1^* +\frac{1}{2} (v_p v_q^* + v_q v_p^*)\right) \sum_{j=1}^n d_{jj}^* v_j v_j^*\right)  U_0^*,\\
     U_0\left(\sum_{i=1}^n d_{ii} v_i v_i^* \left(v_1 v_1^* + \frac{1}{2i} (v_p v_q^* - v_q v_p^*)\right) \sum_{j=1}^n d_{jj}^* v_j v_j^*\right)  U_0^*.
\end{array}
\end{align*}
We can identify $(d_{pp}d_{qq}^*)$ by computing $u_1^*\Phi\left(\left(\rho_{p,q,1}\right)_+\right)u_1$ and $u_1^*\Phi\left(\left(\rho_{p,q,1}\right)_-\right)u_1$. {Therefore, two computational steps are required to obtain $d_{pp}d_{qq}^*$.

\paragraph{Step 4}
Repeat Step 3 until each product $d_{pp}d_{qq}^*$, for all $p \neq q$, is determined. The total computational cost required is  $2n$, including the real part and imaginary part.} Finally, by fixing $d_{11} = 1$ without loss of generality, the remaining diagonal elements $d_{22}, \dots, d_{nn}$ can be uniquely reconstructed.

In summary, the reconstruction process requires $n^2+n+2n$ tomography steps.

\section{Numerical Experiments}
In this section, we present two examples to demonstrate the effectiveness of our proposed algorithm and the convergence of the sequence generated by the algorithm.

\textbf{Example 1}
In this example, we will demonstrate the effectiveness of our proposed algorithm for exploring the unitary quantum channel that sends a given quantum state $\rho$ to an observation state $\sigma$. We generate the original unitary channel $U\in\mathbb{C}^{10\times 10}$ using the SVD algorithm applied to a randomly generated Hermitian positive definite matrix. The quantum state $\rho\in\mathbb{C}^{10\times 10}$ will also be a randomly generated Hermitian positive definite matrix. The observation state $\sigma$ will be calculated as $\sigma = U\rho U^*$. We then apply our proposed algorithm to approximate the channel and record the value of the objective function as well as the difference in the iterative $U^{(s)}$.
\begin{figure}[!ht]
    \begin{subfigure}[t]{.5\textwidth}
        \centering
        \includegraphics[width=\linewidth]{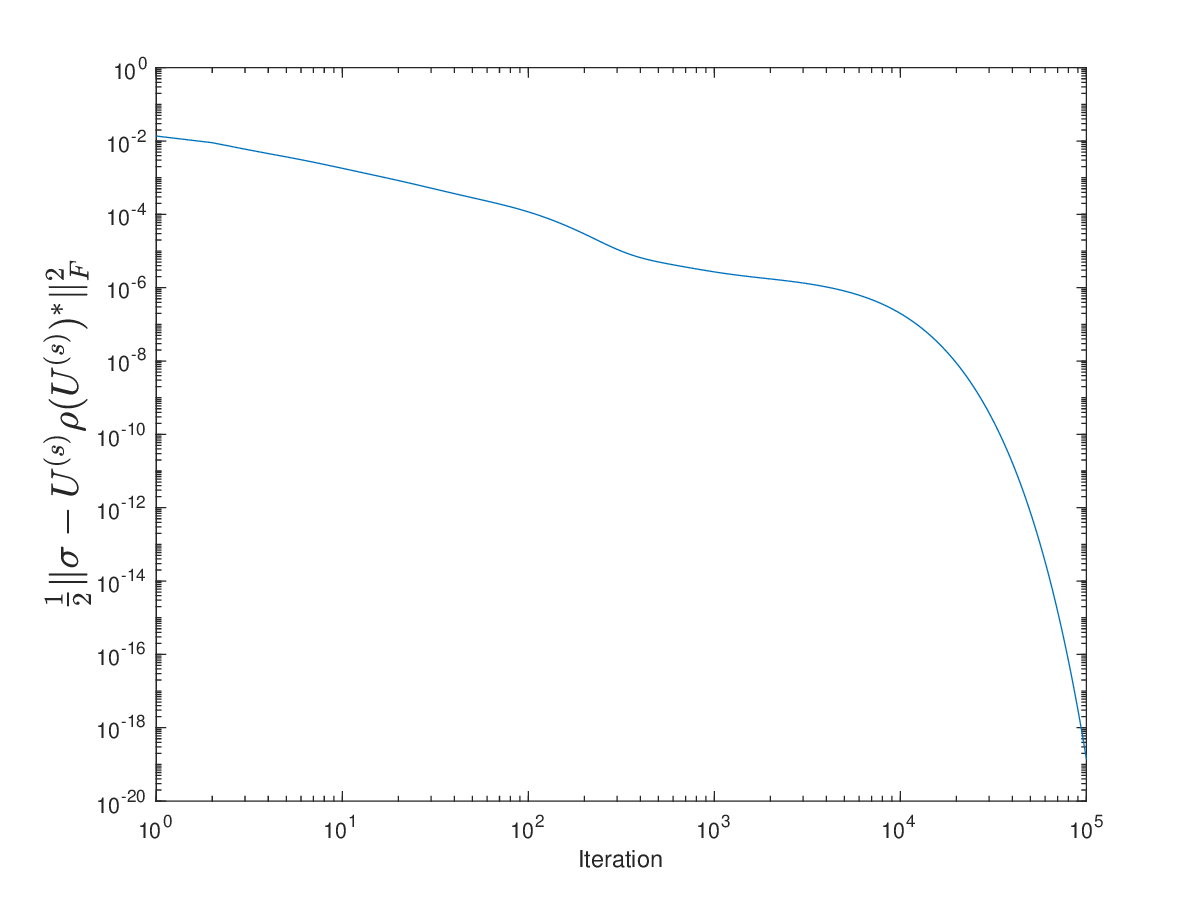}
        \caption{The objective value through iterations}
        \label{fig:example1_obj}
    \end{subfigure}
    \begin{subfigure}[t]{.5\textwidth}
        \centering
        \includegraphics[width=\linewidth]{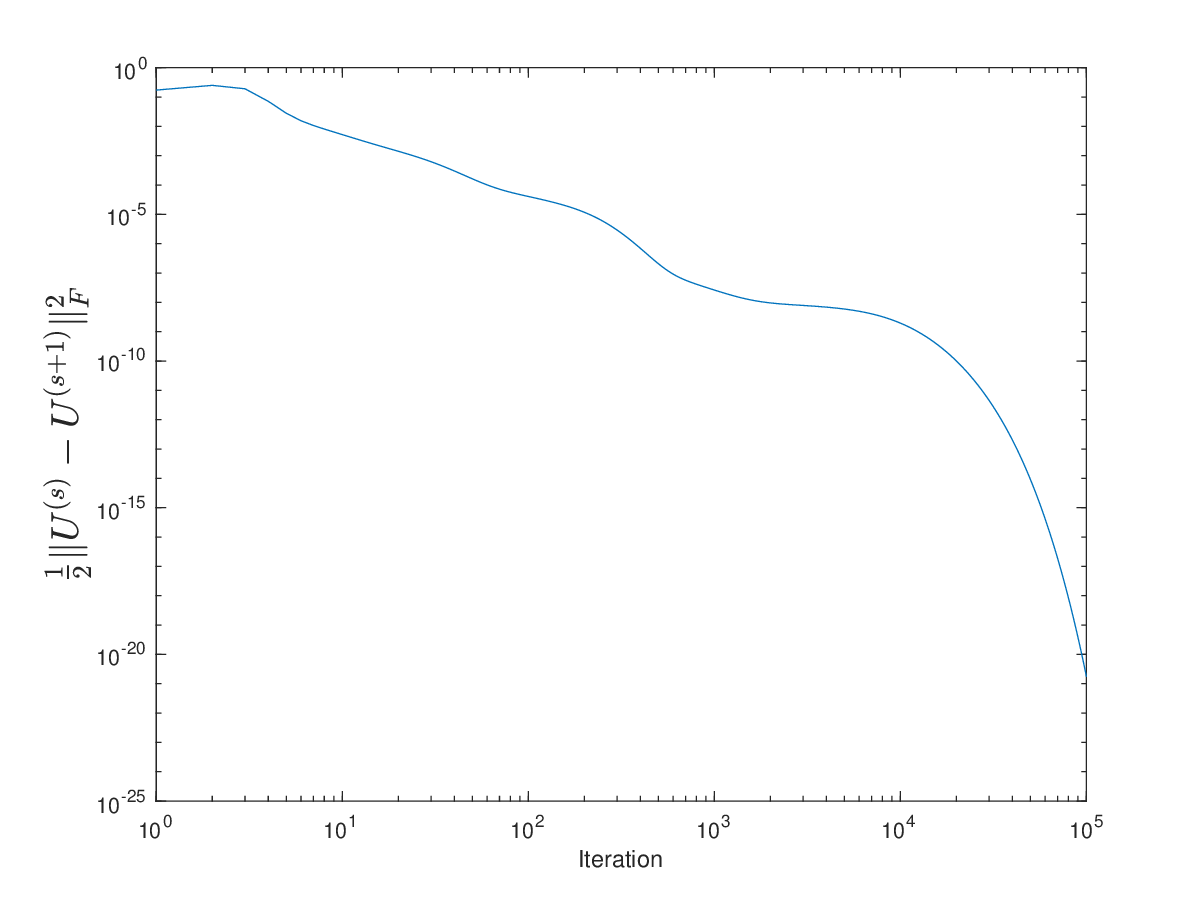}
        \caption{Difference between $U^{(s)}$ and $U^{(s+1)}$ within iterations.}
        \label{fig:example1_diffU}
    \end{subfigure}
    
    \caption{}
\end{figure}

Figure \ref{fig:example1_obj} shows the decreasing behavior of the objective function when applying our proposed algorithm to approximate a unitary quantum channel that transforms the input state $\rho$ into $\sigma$. We terminate the numerical process after 1000 iterations, as the objective value approaches $10^{-30}$, which can be consided zero, although the objective function decreases. Additionally, figure \ref{fig:example1_diffU} demonstrates the difference between the generated $U^{(s+1)}$ and the previous iteration $U^{(s)}$. This difference tends toward zero as the process nears termination. We also observe fluctuations during the iterations, but their magnitude is less than $10^{-20}$. Hence, we are confident that this example sufficiently supports our convergence theorem. Besides, we attribute these fluctuations to software or machine-induced errors.

Beyond demonstrating the effectiveness of the proposed algorithm in exploring a possible unitary quantum channel from a pair of quantum states, we also aim to highlight its capability to identify an appropriate quantum channel from a sequence of quantum pairs. To this end, we prepare 20 quantum input states $\{\rho_i\}_{i=1}^{20}\subset \mathbb{C}^{10\times 10}$ corresponding to observed states $\{\sigma_i\}_{i=1}^{20}$, where each pair is transformed by an unknown quantum channel $\hat{U}$.

\begin{figure}[!ht]
    \begin{subfigure}[t]{.5\textwidth}
        \centering
        \includegraphics[width=\linewidth]{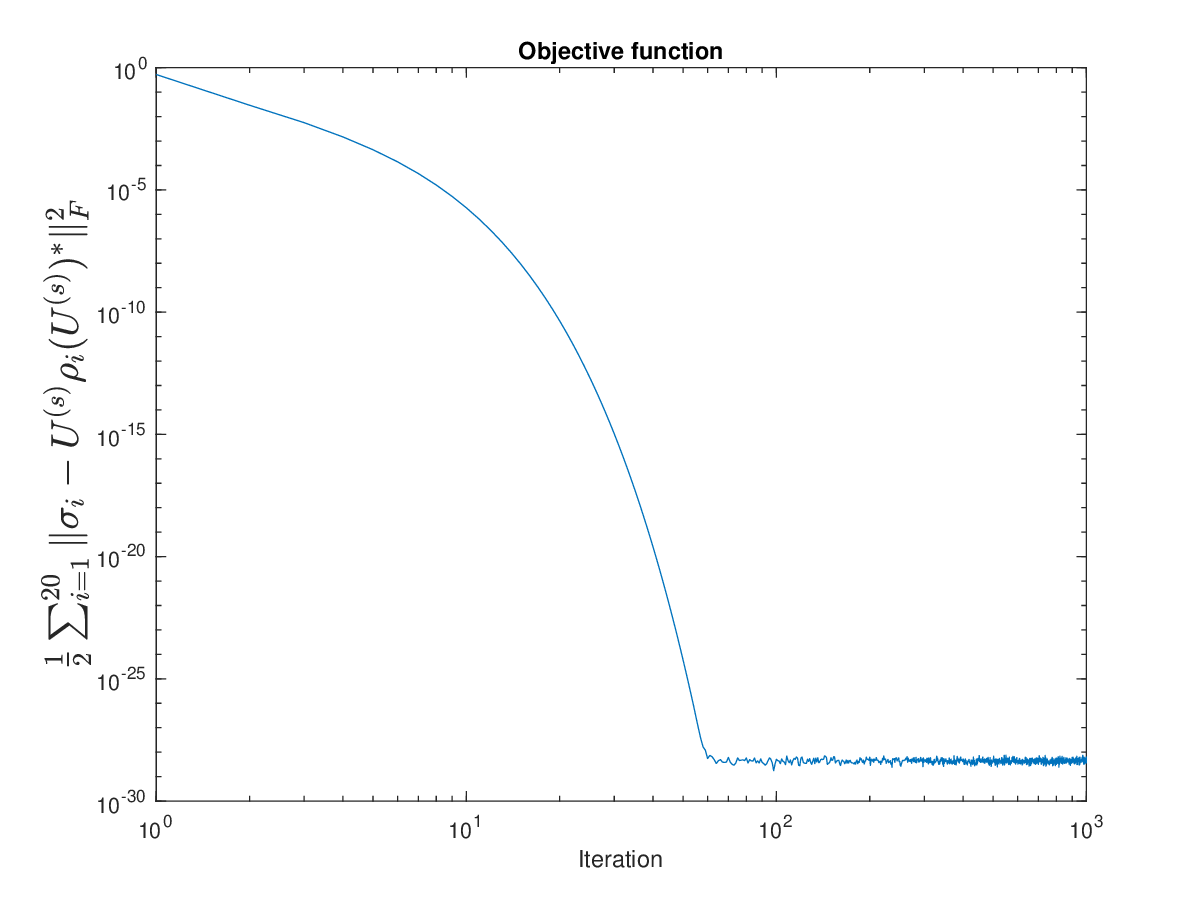}
        \caption{The objective value through iterations}
        \label{fig:example1b_obj}
    \end{subfigure}
    \begin{subfigure}[t]{.5\textwidth}
        \centering
        \includegraphics[width=\linewidth]{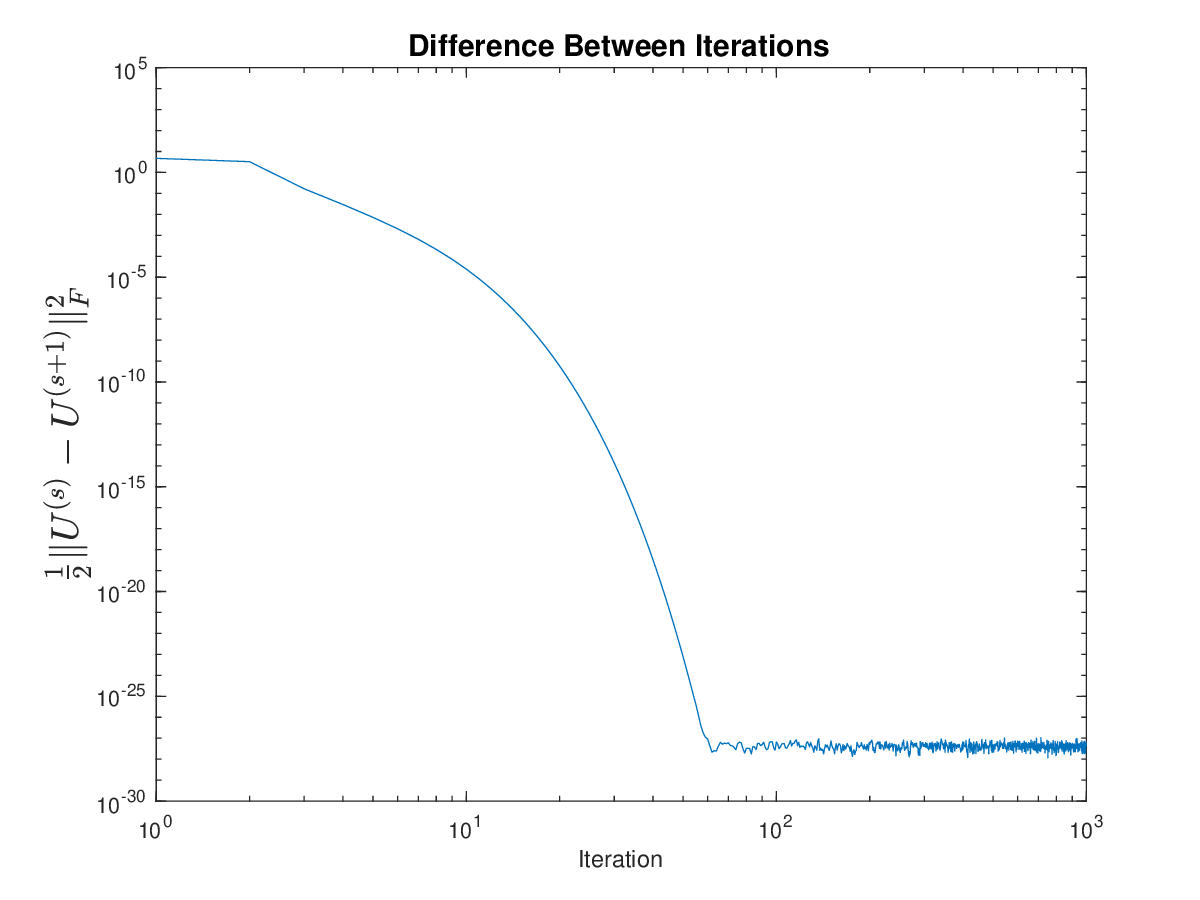}
        \caption{Difference between $U^{(s)}$ and $U^{(s+1)}$ within iterations.}
        \label{fig:example1b_diffU}
    \end{subfigure}
    
    \caption{}
\end{figure}
Figure~\ref{fig:example1b_obj} illustrates the decreasing trend of the objective function over iterations. Additionally, Figure~\ref{fig:example1b_diffU} shows that the difference between the generated $ U^{(s)} $ matrices gradually approaches zero. This example demonstrates that our proposed method can uncover the underlying mechanisms of a quantum process from numerous pairs of inputs and outputs.

\textbf{Example 2}
Identifying an unknown unitary quantum channel motivates this approximation framework. In this example, we will show our method's effectiveness in exploring the unknown quantum channel from given quantum pairs. To simplify our discussion, we assume noise-free output states. Assuming a sequence of output states is generated by the giving compositing quantum logic gate:
\begin{figure}[h]
    \centering
    \begin{subfigure}[t]{.5\textwidth}
        \includegraphics[width=\linewidth]{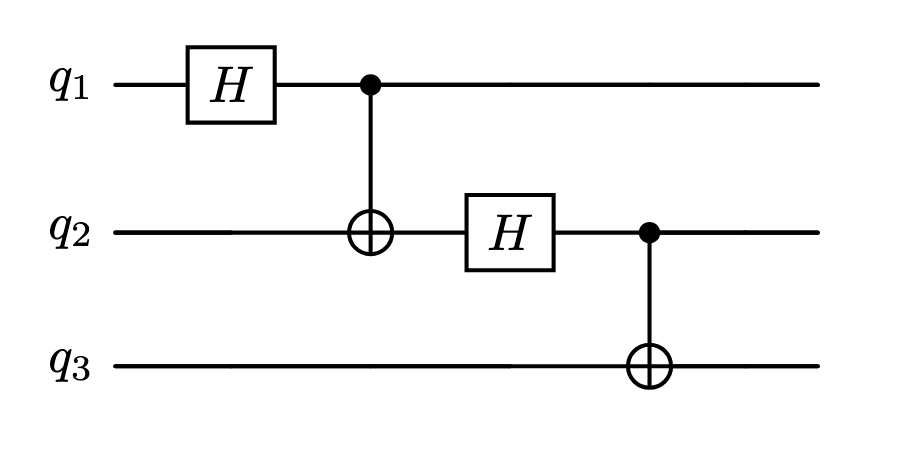}
    \end{subfigure}
    \caption{Synthetic quantum logic gate}
    \label{fig:example3}
\end{figure}
The $H$ gate in Figure \ref{fig:example3} represents the Hadamard gate, and the symbol \begin{quantikz}
\lstick{$q_1$} & \ctrl{1} & \qw \\
\lstick{$q_2$} & \targ{} & \qw
\end{quantikz} represents the control-not gate (CNOT gate).
The matrix representation of the given circuit is
\begin{eqnarray*}
    U = \dfrac{1}{2}\left[\begin{array}{cccccccc}
    1  & 0  & 1  & 0  & 1  & 0  & 1  & 0 \\
    0  & 1  & 0  & 1  & 0  & 1  & 0  & 1 \\
    0  & 1  & 0  & -1 & 0  & 1  & 0  & -1 \\
    1  & 0  & -1 & 0  & 1  & 0  & -1 & 0 \\
    1  & 0  & 1  & 0  & -1 & 0  & -1 & 0 \\
    0  & 1  & 0  & 1  & 0  & -1 & 0  & -1 \\
    0  & -1 & 0  & 1  & 0  & 1  & 0  & -1 \\
    -1 & 0  & 1  & 0  & 1  & 0  & -1 & 0
    \end{array}\right].
\end{eqnarray*}
We prepare a {non-degenerated quantum state $\rho$ and collect its output through the prescribed channel.} We first show in Figure \ref{fig:example3_obj} that the objective function is decreasing and eventually converges to $10^{-20}$ after $2000$ iteration steps. We denote the optimal solution $U'$ of the optimization problem
\eqref{eq:prob1}. Next, we assume that there is a diagonal matrix $D$ such that $U' = U(VDV^*)$, where $V$ collects the eigenvectors of $\rho$ in a given order. We find the elements in $D$ by following Step 3 of the reconstruction procedure outlined in Section 5. 
{
Finally, we reconstruct a unitary matrix $U'$, which differs from the ground truth $U$, a complex scalar with modulus 1. 

To demonstrate the effectiveness of our method, we run the example 20 times with different nondegenerate $\rho$. In each run, we record the difference between the normalized matrix $U$ (scaled by $1 / U_{11}$) and the normalized matrix $U'$ (scaled by $1 / U'_{11}$), namely $\left\|\frac{1}{U_{11}} U - \frac{1}{U'_{11}} U'\right\|_F$. The histogram of all 20 runs is shown in Figure \ref{fig:example3_hist}. The differences remain below $10^{-9}$, demonstrating that the computed result $U'$ from our method differs from the true result result $U$ only by a scalar with modulus 1.}
\begin{figure}[!ht]
    
        \centering
        \includegraphics[width=.5\linewidth]{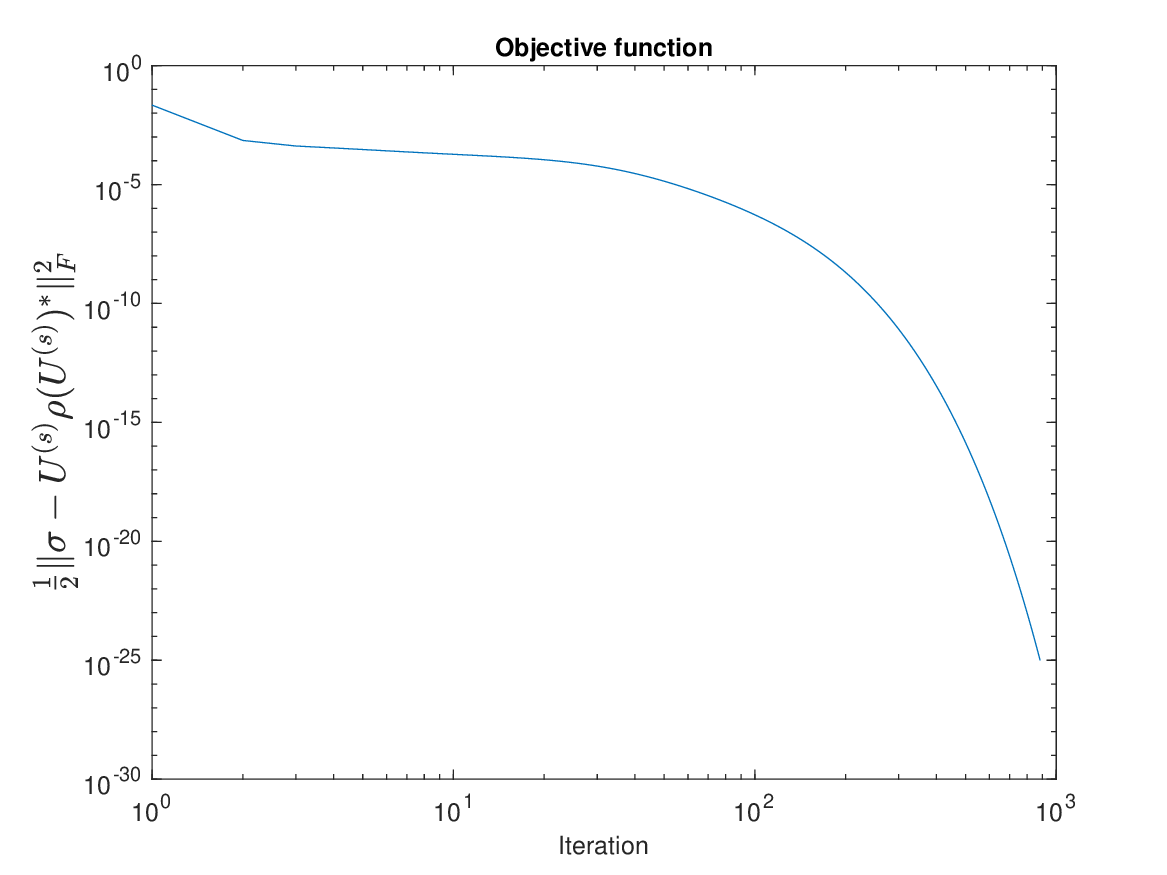}
        \caption{Objective function along iterations of one run}
        \label{fig:example3_obj}
\end{figure}
\begin{figure}[!ht]
        \centering
        \includegraphics[width=.5\linewidth]{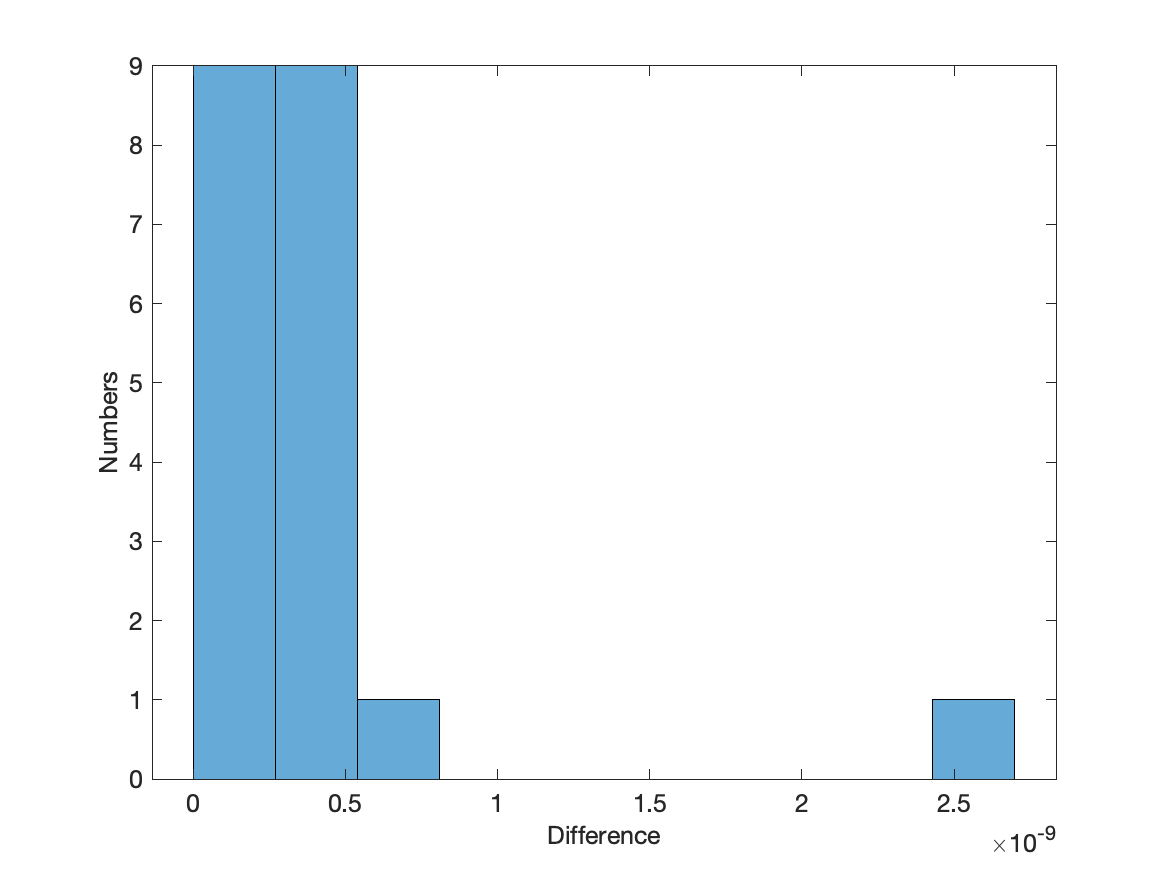}
        \caption{Histogram of the difference $\left\|\dfrac{1}{U_{11}}U-\dfrac{1}{U'_{11}}U'\right\|_F$ in 20 runs}
        \label{fig:example3_hist}
   
\end{figure}


\section{Conclusion}
{
This article focuses on characterizing a quantum channel represented by a single unitary operator using limited data. We begin by showing, through theoretical arguments, that the set of possible unitary matrices derived from limited data forms an equivalence class, which reduces the overall search space. We further prove that these unitary matrices, describing the channel action on all quantum states, differ only by a complex scalar of unit modulus. Building on this insight, we recast the exploration as an optimization problem and propose an effective algorithm based on polar decomposition to solve it. Our method generates a sequence that decreases the objective function and eventually converges to a local minimum through theoretical proofs and numerical experiments.  Consequently, we provide an efficient way to uncover the unknown unitary quantum channel and effectively leverage limited data.}

\section*{Declarations}

\begin{itemize}
\item Funding

The first author received support from the National Center for Theoretical Sciences of Taiwan and
the National Science and Technology Council through grant 112-2628-M-006 -009 -MY4.

The second author received support from the National Center for Theoretical Sciences of Taiwan and
the National Science and Technology Council through grant 110-2628-M-110 -002 -MY4

The third author received support from
the National Science and Technology Council through 113-2917-I-564-033.

\item Conflict of interest/Competing interests\\
The authors declare no potential conflict of interest.

\item Data availability\\
There is no associated data in this manuscript.
\end{itemize}

\noindent



\end{document}